\setlist[enumerate,1]{label=(\roman*)}  
\newtheorem{remark}[theorem]{Remark}
\newtheorem{example}[theorem]{Example}
\DeclareMathOperator{\VaR}{{V@R}}	
\DeclareMathOperator{\AVaR}{{AV@R}}	
\DeclareMathOperator*{\argmin}{arg\,min}
\DeclareMathOperator{\E}{\mathds E}	        
\begin{document}

\pagestyle{empty} 


\title{\textbf{Portfolio reshaping under 1\textsuperscript{st} order stochastic dominance constraints
by the exact penalty function methods}}
\author{Vladimir Norkin\thanks{Gratefully acknowledges funding by Volkswagenstiftung (Volkswagen Foundation)} \and Alois Pichler\thanks{\orcidlink{0000-0001-8876-2429}~\href{https://orcid.org/0000-0001-8876-2429}{orcid.org/0000-0001-8876-2429}. DFG, German Research Foundation – Project-ID 416228727 – SFB 1410. Contact: \href{mailto:alois.pichler@math.tu-chemnitz.de}{alois.pichler@math.tu-chemnitz.de}}}
\date{December 20, 2022} 
\maketitle



\pagestyle{plain} 





\todo[inline]{Distinction to Ruszczynski: * Vladimirs method, * better portfolio with respect to some reference, * non-linear objective, * relax part of constraints and get higher return instead, * combination of constraints and objective}

\todo[inline]{journals: global optimization, Computational mgmt science, soft computing}

\todo[inline]{Structure and storyline:\\
* introduction: general non-linear optimization,\\ 
* relate to Vladimirs method\\
* example: portfolio optimization: bank fees $p(x^\top \xi)$ or $p(x)^\top \xi$ not reducible to integer constraints\\
* simple multi-period, fixed-mixed-strategy\\
* profit vs loss in the objective versus the constraints\\
* example from portfolio optimization\\
* risk-free asset as interior point with proof of convergence
}
\begin{abstract}
	The paper addresses general constrained and non-linear optimization problems.
	For some of these notoriously hard problems, there exists a reformulation as an unconstrained, global optimization problem.
	We illustrate the transformation, and the performance of the reformulation for a non-linear problem in stochastic optimization.
	The problem is adapted from portfolio optimization with first order stochastic dominance constraints.

	\medskip
	\textbf{Keywords.} Global optimization ·  exact penalty method · stochastic dominance
\end{abstract}

\section{Introduction}
We consider the general, constrained optimization problem
\begin{equation}\label{eq:0}
	\text{minimize }f(x),\qquad x\in \mathcal X,
\end{equation}
where the set $\mathcal X\subset \mathbb R^n$ constitutes the set of feasible solutions.
The projective exact penalty method (cf.\ \citet{Norkin_2022}) involves a map $\pi_{\mathcal X}$ such that 
\begin{align}\label{eq:Map}
	\pi_{\mathcal X}(\mathbb R^n)&\subset \mathcal X \text{ and }\\
	\pi_{\mathcal X}(x)&=x  \text{ for every } x\in \mathcal X.
\end{align}
With that, the global and local solutions of the constrained problem~\eqref{eq:0} and the unconstrained, global optimization problem
\begin{equation}\label{eq:1}
	\text{minimize }f\bigl(\pi_{\mathcal X}(x)\bigr)+ \|x-\pi_{\mathcal X}(x)\|, \qquad x\in \mathbb R^n,
\end{equation}
coincide.
Hence, the global, unconstrained optimization problem~\eqref{eq:1} can be considered instead of the constrained optimization problem~\eqref{eq:0}.

The map~$\pi_{\mathcal X}$ in~\eqref{eq:Map} is not specified.
For $\mathcal X$ convex, the projection
\begin{equation}\label{eq:7}
	\pi_{\mathcal X}(x)= \argmin\{\|y-x\|\colon y\in \mathcal X\}
\end{equation}
is well-defined and thus is a candidate for the global optimization problem~\eqref{eq:1}.
However, the projection~\eqref{eq:7} might not be available at cheap computational costs.

For a star domain $\mathcal X$, there exists a point~$x^0\in \mathcal X$ so that every line segment
$[x,x^0]\coloneqq \{\lambda\,x^0+(1-\lambda)x\colon \lambda\in [0,1] \}$ is fully contained in $\mathcal X$, provided that $x\in \mathcal X$.
The function
\begin{align}\label{eq:8}
	\pi_{\mathcal X}(x)&= \lambda_x\, x^0+(1-\lambda_x)x,\quad x\in\mathcal X,\\
	\shortintertext{where}
	\lambda_x&\coloneqq \sup\{\lambda\colon \lambda\, x^0+(1-\lambda) x\in\mathcal X\},
\end{align}
as well satisfies the conditions~\eqref{eq:Map}.
The problem~\eqref{eq:1} is not necessarily smooth nor convex.
So solving~\eqref{eq:1} requires applying non-smooth local and global optimization methods (cf.\ \citet{Norkin_2022}).
\medskip

The present paper outlines the described methodology for a financial portfolio optimization problem under specific constraints, namely 1\textsuperscript{st} order stochastic dominance constraints (FSD), where each feasible portfolio stochastically dominates a given reference portfolio. 
This means that a decision maker with non-decreasing utility function will prefer any feasible portfolio to the reference one.
Such portfolio optimization settings were considered in
\cite{Dentcheva_Ruszczynski_2003, Dentcheva_Ruszczynski_2006} (with the 2\textsuperscript{nd} order stochastic dominance constraints, SSD) and in 
\cite{Noyan_Rudolf_Ruszczynski_2006} (for linear problems with 1\textsuperscript{st} order stochastic dominance constraints).
Problems with FSD constraints are much harder than the ones with SSD constraints, because the former are non-convex.
\cite{Noyan_Ruszczynski_2008} reduce such problems to linear mixed-integer problems.
The present paper develops a different approach to such problems, which is applicable to the nonlinear case as well.
Our approach consists in an application of the exact penalty method to remove the FSD constraints and solution of the obtained penalty problem by non-smooth global optimization methods. 
\bigskip

\paragraph{\bf Outline of the paper}
First, Section~\ref{review} reviews the literature on decision-making and portfolio optimization under stochastic dominance constraints.

In Section~\ref{sec:Setting}, we set the problem of a portfolio optimization under 1\textsuperscript{st} order stochastic dominance constraints and provide some examples of such problems.
Next (cf.\ Section~\ref{sec:exactPF}), we reduce the portfolio optimization problem under 1\textsuperscript{st} order stochastic dominance constraints to the unconstrained problems by means of new exact projective non-smooth and discontinuous penalty functions. 

Forth, we review the successive smoothing method for local optimization of non-smooth and discontinuous functions.
This method is used as a local optimizer within the branch and bound framework for the global optimization of the penalty functions.
We finally give numerical illustrations (Section~\ref{sec:Numerical}) of the proposed approach to financial portfolio optimization under 1\textsuperscript{st} order stochastic dominance constraints on portfolios containing up to ten components with one risk-free asset.

\section{Literature review}\label{review}
The problem of financial portfolio optimization belongs to the class of decision-making problems under uncertainty.
The choice of a particular portfolio is accompanied by an uncertain result in the form of a distribution of future returns.
This and more general decision problems under stochastic uncertainty are studied in the theory of stochastic programming (cf.\ \cite{RuszczynskiShapiro2009}).
In the general case, the formalization of such problems is carried out using preferences defined on the set of possible uncertain results of decisions made.
Preferences establish partial order relationships on the set of decision outcomes, i.e., they satisfy the axioms of reflexivity, transitivity, and anti-symmetry.
Partial order relations make it possible to narrow the choice of preferable solutions to a subset of non-dominated alternatives.
Under additional assumptions about the properties of preferences, the latter can be represented in a numerical form, and then the problem of choosing preferred outcomes turns into a problem of multi-objective optimization.
Conversely, any numerical function on set of outcomes specifies a preference relation.
In essence, every possible scenario of decision outcomes can be considered as an optimization criterion.
A discussion of stochastic programming problems from this perspective can be found in \cite{GutjahrPi2013}.

In optimization and financial portfolio management problems, investments are often allocated according subject to utility and risk criteria.
The original settings of this type were proposed by \citet{Markowitz1952} and \citet{Roy_1952}, who used the mean return as a measure of utility and the variance of returns as a measure of risk.
An attractive feature of these formulations is the relative simplicity of the resulting optimization problems.

Subsequently, other utility and risk measures were proposed and used, such as quantiles, averaged quantiles, semi-deviations of returns from the average value, probabilities of returns falling into a profit or loss area, general coherent risk measures, and others 
(the references include \cite{Telser_1955, Kataoka_1963, Sen_1992, Prekopa_1995,  Kibzun_Kan_1996, Artzner_et_al_1999, Gaivoronski_Pflug_1999, Gaivoronski_Pflug_2005,  Benati_Rizzi_2007,  PflugRomisch2007, Luedtke_Ahmed_Nemhauser_2010,  Wozabal_Hochreiter_Pflug_2010,  Norkin_Boyko_2012, Kibzun_Naumov_Norkin_2013, Norkin_Kibzun_Naumov_2014, Kirilyuk_2015}, cf.\ also the references therein).
The corresponding decision selection problems are computationally more complex and require adapting known methods, or even developing new solution methods.
For example in \cite{RockafellarUryasev2000}, the problems of optimizing a financial portfolio in terms of averaged quantiles are reduced to a linear programming problem and can be effectively solved by existing software tools.
However, problems that involve quantiles or probabilities are much more difficult because they are non-convex and non-smooth with a possibly non-convex and disconnected admissible region.
The problem may be even harder, if the return depends non-linearly on the portfolio structure (see, e.g., the discussion of the properties of these problems in \cite{Gaivoronski_Pflug_1999, Gaivoronski_Pflug_2005}).
For example, in the works by \cite{Noyan_Rudolf_Ruszczynski_2006, Benati_Rizzi_2007, Luedtke_Ahmed_Nemhauser_2010, Norkin_Boyko_2012, Kibzun_Naumov_Norkin_2013, Norkin_Kibzun_Naumov_2014}, such problems are reduced to problems of mixed-integer programming in the case of a discrete distribution of random data.
\cite{Gaivoronski_Pflug_2005} developed a special method for smoothing a variational series to optimize a portfolio by a quantile criterion.
\cite{Wozabal_Hochreiter_Pflug_2010} give a review of the quantile constrained portfolio selection problem, present a difference-of-convex representation of involved quantiles, and develop a branch and bound algorithm to solve the reformulated problem.

On the other hand, natural relations of stochastic dominance of the first, second and higher orders are known on the set of probability distributions.
For example, the first order stochastic dominance relation is defined as the excess of one distribution function over another distribution function.
The relation of stochastic dominance of the second order is determined by the relation of the integrals of the distribution functions of random variables.
With the second-order stochastic dominance relation, the decision maker's negative attitude towards risk can be expressed (see a discussion of these issues in \cite{Muller_Stoyan_2002, Dentcheva_Ruszczynski_2013}).

A natural question arises about the connection between decision-making problems in a multi-criteria formulation and in terms of certain preference relations, in particular, stochastic dominance relations.
The connection between mean-risk models and second-order stochastic dominance relations was studied in the works by \cite{Rusz1999, Ruszczynski2001, RuszOgryczak}.
In the works by \cite{Dentcheva_Ruszczynski_2012, Dentcheva_Ruszczynski_2013} it is shown under what conditions the problem of decision-making in terms of preference relations is reduced to the problem of optimizing a numerical indicator. 

\cite{Dentcheva_Ruszczynski_2003, Dentcheva_Ruszczynski_2006} proposed a mixed financial portfolio optimization model in which a numerical criterion is optimized, and constraints are specified using second-order stochastic dominance relations.
The feasible set in this setting consists of decisions, which dominate some reference one and are preferred by any risk averse decision maker.
In the case of a discrete distribution of random data, the problem is reduced to a linear programming problem of (large) dimension.
These works have given rise to a large stream of work on stochastic optimization problems under second-order stochastic dominance constraints (see the reviews by 
\cite{Dentcheva_Ruszczynski_2011,  FabianMitra, GutjahrPi2013}).

\cite{Noyan_Rudolf_Ruszczynski_2006, Noyan_Ruszczynski_2008, Dentcheva2014} considered similar mixed problems, but with first-order stochastic dominance relations in the constraints.
To solve them, a method of reduction to problems of linear mixed-integer programming with subsequent continuous relaxation of Boolean constraints and the introduction of additional cutting constraints is proposed. 
\cite{Dentcheva_Henrion_Ruszczynski_2007} studied the stability of these problems with respect to perturbation of the involved distributions on the basis of general studies of the stability of stochastic programming problems.
\medskip

In this article, we consider similar financial portfolio optimization problems under 1\textsuperscript{st} order stochastic dominance constraints, but from a different point of view and apply a different solution approach that is also applicable to problems with nonlinear random return functions.
This problem is viewed as the problem of optimizing the risk profile of the portfolio according to the preferences of the decision maker.
Namely, the decision maker sets the desired risk profile (the form of the cumulative distribution function) and tries to find an acceptable portfolio that dominates this risk profile.
This is one statement, and the other is that, under the condition of the existence of an admissible portfolio, i.e., portfolio dominating some reference portfolio, it can be any index or risk-free portfolio, choose a portfolio with the desired risk profile by optimizing one or another function (for example risk measures, as average quantiles, etc.).
In this way it is possible to satisfy the needs of both a risk-prone decision maker and a risk-averse decision maker.
In the first case, the average quantile function for high returns is maximized, and in the second case, the mean quantile function for low returns is maximized, in both cases with a lower bound on the quantile risk profile.
In the first case, the risk profile is stretched, and in the second case, it is compressed and becomes more like a profile of a deterministic value.
The reshaping of the risk profile can be made both through selection of different objective function and by adding new securities to the portfolio, e.g., as commodities, etc., cf.\ \cite{PichlerWestgaard}.

In the problems under consideration, the objective function is non-linear, non-convex, and possibly non-smooth or even discontinuous, and the number of constraints is continual (uncountable).
But when the reference profile has a stepped character, one can limit oneself to a finite number of restrictions by the number of steps of the reference profile.
In this problem, the admissible area may turn out to be non-convex and disconnected. 
Thus, the problem under consideration is a global optimization problem with highly complex and nonlinear constraints.
To solve it, we first reduce it to an unconstrained global optimization problem by applying new penalty functions, namely, discontinuous penalty functions as in  \cite{Batukhtin_1993, Knopov_Norkin_2022} and the so-called projective penalty functions as in \cite{Norkin_2020, Norkin_2022}.
In the first case, the objective function outside the allowable area is extended by large but finite penalty values, and in the second case, it is extended at infeasible points by summing the value of the objective function in the projection of this point onto the feasible set and the distance to this projection.
In this case, the projection is made in the direction of some known internal feasible point.
After such a transformation, the problem is still a complex unconstrained global optimization problem.
In order to solve it, we further apply the method of successive smoothing of this penalty function, i.e., we minimize successive smoothed approximations of the penalized function, starting from relatively large smoothing parameters with its gradual decreasing to zero.
It is known that the smoothed functions can be optimized by the method of stochastic gradients, where the latter have the form of finite difference vectors in random directions,
cf.\ \cite{Mayne_Polak_1984, Mikhalevich_Gupal_Norkin_1987,  Ermoliev_Norkin_Wets_1995,  
Nesterov_Spokoiny_2017}. 
Here, smoothing plays a dual role. Firstly, it allows optimizing non-smooth and discontinuous functions and, secondly, it levels out shallow local extrema.
Although smoothing makes it possible to ignore small local extrema, it does not guarantee convergence to the global extrema.
Therefore, we put the method of sequential smoothing in a general scheme of the branch and bound method, where the smoothing method plays the role of a local optimizer on subsets of the optimization area.
The scheme of the branch and bound method is designed in such a way that the calculations are concentrated in the most promising areas of the search for the global extrema.

This article describes the financial portfolio optimization model with 1\textsuperscript{st} order stochastic dominance constraints and illustrates the proposed approach to its solution on the problems of reshaping the risk profile of portfolios of small dimension.
At the same time, the results of changing the shape of the risk profile are presented in graphical form, which allows visually comparing the resulting profile with the reference one, and, if necessary, continue adaptation of the profile to the preferences of the decision maker.

\section{Mathematical problem setting}\label{sec:Setting}
The financial portfolio is described by a vector $x= (x_1,\dots,x_n)'$ of values $x_i$ and by a random vector of returns $\omega= \left(\omega_1,\dots,\omega_n \right)'\in \Omega$ of assets, $i= 1,\dots,n$, in some fixed time interval; $(\cdot)'$ means transposition of a vector.
Denote by
\[	X= \left\{x\in \mathbb R^n\colon \sum\nolimits_{i=1}^n x_i\le 1,\ x_i\ge c_i\ge -\infty\right\}\]
the set of admissible portfolios with a unit maximal total cost of the whole portfolio, $c_i$~is a lower bound on the value of component $i$ of the portfolio (e.g., a short-selling constraint or a limitation on borrowing assets).
In the definition of the set~$X$, the inequality $\sum\nolimits_{i=1}^n x_i\le 1$ is used, which means that  $x_0= 1- \sum\nolimits_{i=1}^nx_i$~-- the non-invested funds~-- have zero yield.
The portfolio is characterized by a random return $f(x,\omega )=\omega' x$, by the mean return $\mu (x)= \E_\omega f(x,\omega)= \sum_{i=1}^n x_i \E_\omega\omega_i$ for the considered period of time and by the variance of return $\sigma^2(x)=\E_\omega\bigl( f(x,\omega)-\E_\omega f(x,\omega)\bigr)^2$, where $\E_\omega$ denotes the mathematical expectation with respect to the distribution of random variable $\omega$, and $(\cdot)'$ denotes the transposition of a vector.

According to Markowitz (cf.\ \cite{Markowitz1952, Markowitz_1959}), the portfolio is optimized by two criteria, the mean return $\mu(x)$ and the standard deviation. 
A set of non-dominated portfolios
$\Gamma= \left\{\left(y, \sigma^*(y)\right)\colon y\in \mathbb R\right\}$ with $\sigma^{*2}(y)= \min_{x\in X,\,\mu(x)\ge y}\sigma^2(x)$, is called the \emph{efficient frontier} (boundary).
An optimal portfolio is selected from the efficient frontier by optimization of some utility function $\Phi(\mu ,\sigma)$, defined in the “risk--return” plane~$(\sigma,\mu)$.

The classical financial portfolio models assume a linear dependence of the return on the portfolio structure, for which alternative problem reformulations are available.
Non-linearities appear when random returns are modeled by some parametric distribution.
The following example provides one more example of such a nonlinear dependence.

\begin{example}[Nonlinear return in a dynamic portfolio model]
	Consider a portfolio of $n$ assets through discrete time intervals $t= 1,\dots,T$.
	In time period~$t$, the value of assets in category~$i$ grows by a factor $\omega_i^t$, where $\omega^t$, $t= 1,2,\dots,T$, is a sequence of $n$-dimensional random variables.
	Denote $X= \{x\in \mathbb R^n\colon x_i\ge 0, i=1,\dots,n,\ \sum_{i=1}^n x_i =1\}$ and $x^t\in X$ the structure of the portfolio at time interval $t$ (the case of the fixed mix portfolio $x= x^t$ for all~$t$ was considered in \cite{Norkin_Pflug_Ruszczynski_1998}).
	After each time period one rebalances the portfolio to have the proportions~$x^t$ of the values of assets in various categories. 
	Each selling/\,buying of asset induces transaction costs of a fraction $\alpha_i$ of the amount traded.
	The problem is to find the strategy $x= (x^1,\dots, x^n)$ that maximizes, e.g., the expected portfolio wealth after~$T$ periods.
	Denote the wealth at the beginning of period~$t$ by $W(t)$ and assume that $W(1)=1$.
	Then, at the end of period~$t,$ the wealth in category~$i$ equals
	$x_i^t W(t)\omega_i^t$, while the transaction costs necessary to establish the proportion $x_i^t$ are equal to
	$\alpha_i\cdot\Big|x_i^t\, W(t)\omega_i^t- x_i^t \sum_{j=1}^n x_j^t\,  W(t)\omega_j^t\Big|$ and thus
	\[	W(t+1)= W(t) \sum_{i=1}^n \Bigl(x_i^t\, \omega_i^t-\alpha_i\Big|x_i^t\, \omega_i^t -x_k^t \sum_{j=1}^n x_j^t\, \omega_j^t\Big|\Bigr).\]
	The random return therefore has the form
	\[	f(x,\omega)= \prod_{t=1}^T\Bigl\{\sum_{i=1}^n\Bigl( x_i^t\omega_i^t-\alpha_i\Big|x_i^t\omega_i^t -x_i^t\sum_{j=1}^n x_j^t \omega_j^t\Big|\Bigr)  \Bigr\}, \quad x\in X^n.\]
\end{example}

	Suppose the random vector~$\omega$ is given by a discrete (an empirical, e.g.)\ distribution $\{\omega^1,\ldots,\omega^m\}$ with equiprobable values $\omega^i \in \mathbb R^n$, $i=1,2,\ldots,m$. 
	Then the average portfolio return 
	$\mu(x)=\E\omega' x$ and the \emph{cumulative distribution function} (CDF) of the portfolio return
	$\mathcal F_x(t)=Pr\{\omega' x\le t\}$ are given by
	\begin{align}
		\mu_m(x)&=\frac1m\sum_{i=1}^m (\omega^i)' x\\
	\shortintertext{and}
		\mathcal F_{x,m}(t)&=m^{-1}\{\#i: (\omega^i)' x < t\}.
	\end{align}

	The portfolio optimization problem under 1\textsuperscript{st} order stochastic dominance constraint has the form
	\begin{align}\label{obj1}
		\text{maximize }&f_m(x)\\
		\text{ subject to }&\mathcal F_{x,m}(t)\le \mathcal F_\mathit{ref}(t)\text{ for all }t\in\mathbb R \text{ and }x\in X,	\label{sdc1}
	\end{align}
where $\mathcal F_\mathit{ref}(t)$ is a some reference cumulative distribution function 
(continuous from the left), for example
$\mathcal F_{x_\mathit{ref},\,m}(t+\delta(t))$, $\delta(t)\ge 0$, which is also called a \emph{reference risk profile}.

Here, the function $\mathcal F_x(\cdot)$ is continuous from the left and $\delta(\cdot)$ and $\mathcal F_\mathit{ref}(\cdot)$ are assumed to be continuous from the left.
In such a case, the function $\mathcal F_{x,m}(t)$ appears to be lower semicontinuous in $(x,t)$, hence the sets 
$\{x\in\mathbb R^n\colon \mathcal F_{x,m}(t)\le \mathcal F_\mathit{ref}(t)\}$
are closed and the sets $\{x\in X\colon \mathcal F_{x,m}(t)\le \mathcal F_\mathit{ref}(t)\}$ are compact for each $t$.

If the reference function $\mathcal F_\mathit{ref}(t)$ has only finitely many jumps at
\[	\mathcal T_\mathit{ref}= \left\{t_1,t_2,\ldots,t_k\right\},\] then 
\[	\left\{x\in X\colon \mathcal F_{x,m}(t)\le \mathcal F_\mathit{ref}(t)\text{ for all } t\in \mathbb R\right\}
	= \left\{x\in X\colon \mathcal F_{x,m}(t)\le \mathcal F_\mathit{ref}(t), t\in \mathcal T_\mathit{ref} \right\} \]
and the feasible set in~\eqref{sdc1} is compact.
\bigskip

As objective $f_m(x)$ in the master problem~\eqref{obj1} we consider
\begin{itemize}
	\item the mean value $\mu_m(x)$,
	\item some Value-at-Risk function $\VaR_\gamma(x)$, $\gamma\in (0,1)$,
	\item the average Value-at-Risk function 
		\begin{equation}\label{avars}
			\AVaR_{\alpha,\beta}(x)\coloneqq\frac 1{\beta-\alpha}\int_\alpha^\beta \VaR_{\gamma}(x)d\gamma,\quad 0\le \alpha< \beta\le 1,
		\end{equation}
\end{itemize}
in particular, $\AVaR_\gamma(x)=\AVaR_{\gamma,1}(x)$ and $\AVaR_{0,1}(x)= \mu_m(x)$.
\medskip

We formally can associate some random variable $\xi_\mathit{ref}$ with the CDF $\mathcal F_\mathit{ref}(t)$.
Then the family of inequalities~\eqref{sdc1} states that random variable $\xi_x=\omega'x$
1-st order stochastically dominates the random variable $\xi_\mathit{ref}$.
We further remark there there can be several stochastic dominance constraints with corresponding reference CDF $\mathcal F_\mathit{ref}^i$, which can be replaced by the single CDF $\mathcal F_\mathit{ref}= \min_i\mathcal F_\mathit{ref}^i$.
\medskip

\paragraph{\bf Alternative problem formulation}
A reformulation of the problem~\eqref{obj1}--\eqref{sdc1} involves the inverse of the cumulative distribution functions instead of the CDF.
To this end let 
\[	\mathcal Q_{x,m}(\alpha)\coloneqq \sup_{t\in\mathbb R}\left\{t\colon \mathcal F_{x,m}(t)\le\alpha\right\},
\quad \alpha\in[0,1],\]
be the return quantile (generalized inverse) function associated with the decision $x$, and $\mathcal Q_\mathit{ref}(\alpha)$ be some reference quantile function, continuous from above.%
	\footnote{Note, that $\mathcal F_{x,m}(\cdot)$ is the upper quantile function, i.e., it is continuous from the right (upper semicontinuous), the function $\mathcal Q_\mathit{ref}(\cdot)$ is assumed to be continuous from the right.}

Consider the problem
\begin{align}
	\text{maximize }&f_m(x)\label{obj2}\\
	\text{subject to }& \mathcal Q_\mathit{ref}(\alpha)\le \mathcal Q_{x,m}(\alpha)\ \text{ for all }\alpha\in[0,1]\text{ and } x\in X,\label{sdc2}
\end{align}


The quantile function
	$\mathcal Q_{x,m}(\alpha)\coloneqq\sup_{t\in\mathbb R}\left\{t\colon \mathcal F_{x,m}(t)\le \alpha \right\}$
is a marginal function, so under the assumptions made it is upper semicontinuous in $(x,\alpha)$ by \citet[Ch.~1, Sec.~1, Prop.~21]{Aubin1984}.
If the reference function $\mathcal Q_\mathit{ref}$ has a step like character with steps at ${\cal A}_{ref}= \{0=\alpha_1,\alpha_2,\dots,\alpha_k=1\}$,
then the feasible set
\begin{align}
	\left\{x\in X\colon \mathcal Q_\mathit{ref}(\alpha)\le \mathcal Q_{x,m}(\alpha) \text{ for all} \alpha\in [0,1]\right\}\\
	=\left\{x\in X\colon \mathcal Q_\mathit{ref}(\alpha)\le \mathcal Q_{x,m}(\alpha)\; \forall \alpha\in {\cal A}_\mathit{ref}\right\}
\end{align}
is compact.

Employing different objective functions $f_m(x)$ allows to reshape the risk profile 
$\mathcal F_x(t)$ and $\mathcal Q_x(\alpha)$ in a desirable manner.
For example, the problem 
\begin{align}
	\text{maximize }&\AVaR_{\gamma,1}(x)\\
	\text{subject to }&
		\mathcal Q_{x_\mathit{ref}}(\alpha)-\delta(\alpha)\le  \mathcal Q_{x,m}(\alpha),\  \delta(\alpha)\ge 0\ (\alpha\in[0,1]) \text{ and }x\in X,
\end{align}
%
can be used for searching more risky but potentially more profitable portfolios than some 
reference one $x_\mathit{ref}$ with risk profile $\mathcal Q_{x_\mathit{ref}}(\alpha)$ and step back function $\delta(\alpha)$.
The problem
\begin{align}
	\text{maximize }& \AVaR_{0,\gamma}(x)\\	
	\text{subject to }&
		\mathcal Q_{x_\mathit{ref}}(\alpha)-\delta(\alpha)\le \mathcal Q_{x,m}(\alpha),\  \delta(\alpha)\ge 0\ (\alpha\in [0,1])\text{ and }x\in X
\end{align}
%
can be used to obtain less risky and less profitable portfolio than a reference portfolio based on the allocation $x^0$.
Note, however, that the objective functions in these problems can be discontinuous.
\medskip

\begin{example}[Portfolio selection under a single Value-at-Risk ($\VaR$) constraint, cf.\ \cite{Wozabal_Hochreiter_Pflug_2010} and corresponding references therein]
	Let $\mathcal F_x(t)$ be the CDF and $Q_x(\alpha)$ be the 
	$\alpha$-quantile of the random return $\xi_x=\omega'x$ for some fixed $\alpha$, 
	$q_\alpha$ be the reference value for $Q_x(\alpha)$, 
	and $\tau\le\min_{1\le i\le m}\omega_i$ a.s. Consider the problem
	\begin{align}
		\text{maximize   }& f_m(x)\\
		\text{subject to }& Q_x(\alpha) \ge q_\alpha,\label{quant_ineq}
	\end{align}
	where $\alpha$ is a fixed risk level.
	With the reference CDF
	\[	\mathcal F_{\alpha,\tau}(t)\coloneqq
		\begin{cases}
			0 & \text{if }t\le \tau,\\
			\alpha & \text{if }\tau< t\le q_\alpha,\\
			1 & \text{if }q_\alpha < t,
		\end{cases}\]
	the inequality~\eqref{quant_ineq} is equivalent to the constraints 
	$\mathcal F_x(t)\le \mathcal F_{\alpha,\tau}(t)$ for all $t$.
	In this way, multiple quantile constraints can be reduced to a stochastic dominance constraint.
\end{example}
\medskip

\begin{example}[Decision making under catastrophic risks, \cite{Norkin_2006}]
	Catastrophic risks, as catastrophic floods, earthquakes, tsunami, etc., designate some “low probability~-- high consequences” events.
	Usually, they are described by a list of possible extreme events (indexed by $i= 1,2,\dots, I$) that can happen once in ten, fifty, hundred, etc.\ years. 
	Decision-making under catastrophic risks means designing a certain mitigation measures to prevent unacceptable losses.
	\cite{Norkin_2006} proposed the following framework for decision-making under catastrophic risks.

	Let $x\in X$ denote a decision (a complex of countermeasures) from some set $X$ of possible decisions, each associated with costs $c(x)$.
	For each kind of event $i$, experts can define reasonable (“acceptable”) levels of losses $q_i$ due to this event, $q_1<\ldots < q_I$.
	Suppose we can model each event $i$, its consequences and losses $l_i(x)$ under the decision $x\in X$.
	Then the corresponding decision-making problem is
	\begin{align}
		\text{minimize }& c(x) \label{cat1}\\
		\text{subject to }& l_i(x)\le q_i,\quad i=1,2,\dots, I. \label{cat2}
	\end{align}
	Although the framework does not include explicit probabilities of the events $i$, we can formally introduce probabilities $p_1>p_2>\ldots>p_I$, e.g., $p_1=1/10$, $p_2=1/50$, $p_3=1/100$, etc., that event~$i$ happens in any given year.
	Define also the absolute losses $q_\infty>q_i$ for all $i$.
	By defining two CDF,
	\[	\mathcal F_\mathit{ref}(t)\coloneqq
		\begin{cases}
			0,& \text{if }t<0,\\
			p_1,& \text{if }0\le t<q_1,\\
			\sum_{k=1}^i p_k,& \text{if }q_{i-1}\le t< q_i,\ 2\le i\leq I,\\
			1,& \text{if }q_I\le t
		\end{cases} \]
	and
	\[	\mathcal F_x(t)\coloneqq
		\begin{cases}
			0,& \text{if }t< 0,\\
			p_1,& \text{if }0\le t< l_1(x),\\
			\sum_{k=1}^i p_k,&\text{if } l_{i-1}(x)\le t< l_{i}(x), 2\le i\le I,\\
			1,&  \text{if }l_I(x)\le t,
		\end{cases} \]
	we can express the constraints~\eqref{cat2} as
	$\mathcal F_x(t)\ge \mathcal F_\mathit{ref}(t)$ for all $t$, i.e., in terms of 1\textsuperscript{st} order stochastic dominance.
\end{example}




\section{The solution approach: exact penalty functions}\label{sec:exactPF}
In case of a discrete random variable $\omega$, the function $\mathcal F_{x,m}(t)$ in~\eqref{sdc1} is discontinuous in $t$ and $x$.
For the solution of the problem~\eqref{obj1}--\eqref{sdc1}, we apply the exact discontinuous and the exact projective penalty functions from \cite{Batukhtin_1993, Norkin_2020, Norkin_2022, Knopov_Norkin_2022,  Galvan_et_al_2021}.

\subsection{Finding a feasible solution}
If the reference function $\mathcal F_\mathit{ref}(\cdot)$ has a stepwise character with jumps at step points
$\mathcal T_\mathit{ref}= \{t_1,\dots,t_k\}$, we may set
\[	G_m(x)\coloneqq
		\max_{t\in\mathcal T_\mathit{ref}}\big(\mathcal F_{x,m}(t)- \mathcal F_\mathit{ref}(t)\big). \]
With that, the constraints~\eqref{sdc1} are equivalent to the inequality $G_m(x)\leq 0$.

To find a feasible solution $x^0$ for the problem~\eqref{obj1}--\eqref{sdc1}, we solve the problem
\begin{equation}\label{funcG}
	\min_{x\in X}\, G_m(x).
\end{equation}
If for some $x^0\in X$ it holds that $G_m(x^0)\le 0$, then $x^0$ is a feasible solution of the problem \eqref{obj1}--\eqref{sdc1}.

Similarly, to find a feasible solution of problem \eqref{obj2}--\eqref{sdc2}, we solve the problem
\begin{equation}\label{funcH}
	\min_{x\in X}\ H_m(x)\coloneqq \max_{\alpha\in{\cal A}_\mathit{ref}}\left\{\mathcal Q_\mathit{ref}(\alpha)-\mathcal Q_{x,m}(\alpha)\right\},
\end{equation}
where $\mathcal A_\mathit{ref}$ is the set of jump points of $\mathcal Q_{x,m}(\cdot)$.

\subsection{Structure of the feasible set}
The structure of the feasible set of the problems~\eqref{obj1}--\eqref{sdc1} and \eqref{obj2}--\eqref{sdc2} heavily depends on the choice of the reference profiles
$\mathcal F_\mathit{ref}(t)$ and $\mathcal Q_\mathit{ref}(\alpha)$.
Figure~\ref{figfeaset} illustrates disconnected and non-convex feasible sets for a portfolio consisting of 3 components only.

Suppose the portfolio $x^{0}= (0.31,0.69,0)'$ includes the first two assets of Table~\ref{table2} from the appendix, with random return~$\omega$ given by the first two columns of this table.
Let $\mathcal F_0(t)=\Pr\{\omega x^{0}<t\}$ be the risk profile of this portfolio and
$\mathcal F_\mathit{ref}(t)=\mathcal F_0(t+\delta)$, $\delta=0.05$, be the reference risk profile.
The left figure in Fig.~\ref{figfeaset} displays the shape of the non-convex disjoint feasible set of problem~\eqref{obj1}--\eqref{sdc1} for this example.
The figure in the middle gives an example of the feasible set of problem~\eqref{obj1}--\eqref{sdc1}, when a risk-free asset is infeasible.
The right figure of Fig.~\ref{figfeaset} corresponds to a case of feasible risk-free portfolio.

A possibly complex structure of the feasible set motivates us to consider discontinuous penalty functions.
Besides, in case of a discrete distribution of the portfolio return, the effective objective functions~\eqref{avars} also can be discontinuous.

\begin{figure*}[ht]       
  \includegraphics[width=0.32\textwidth]{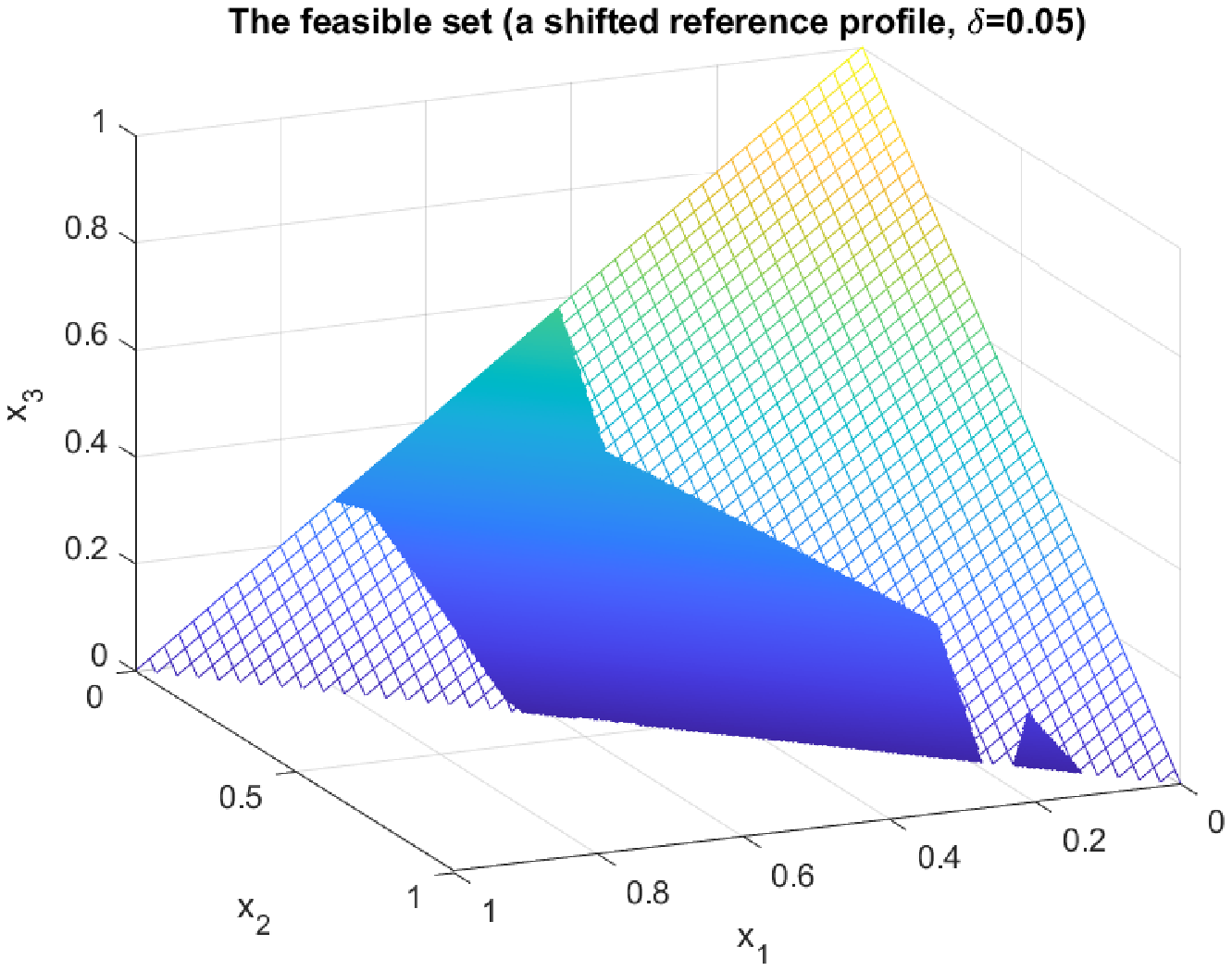}
	\includegraphics[width=0.32\textwidth]{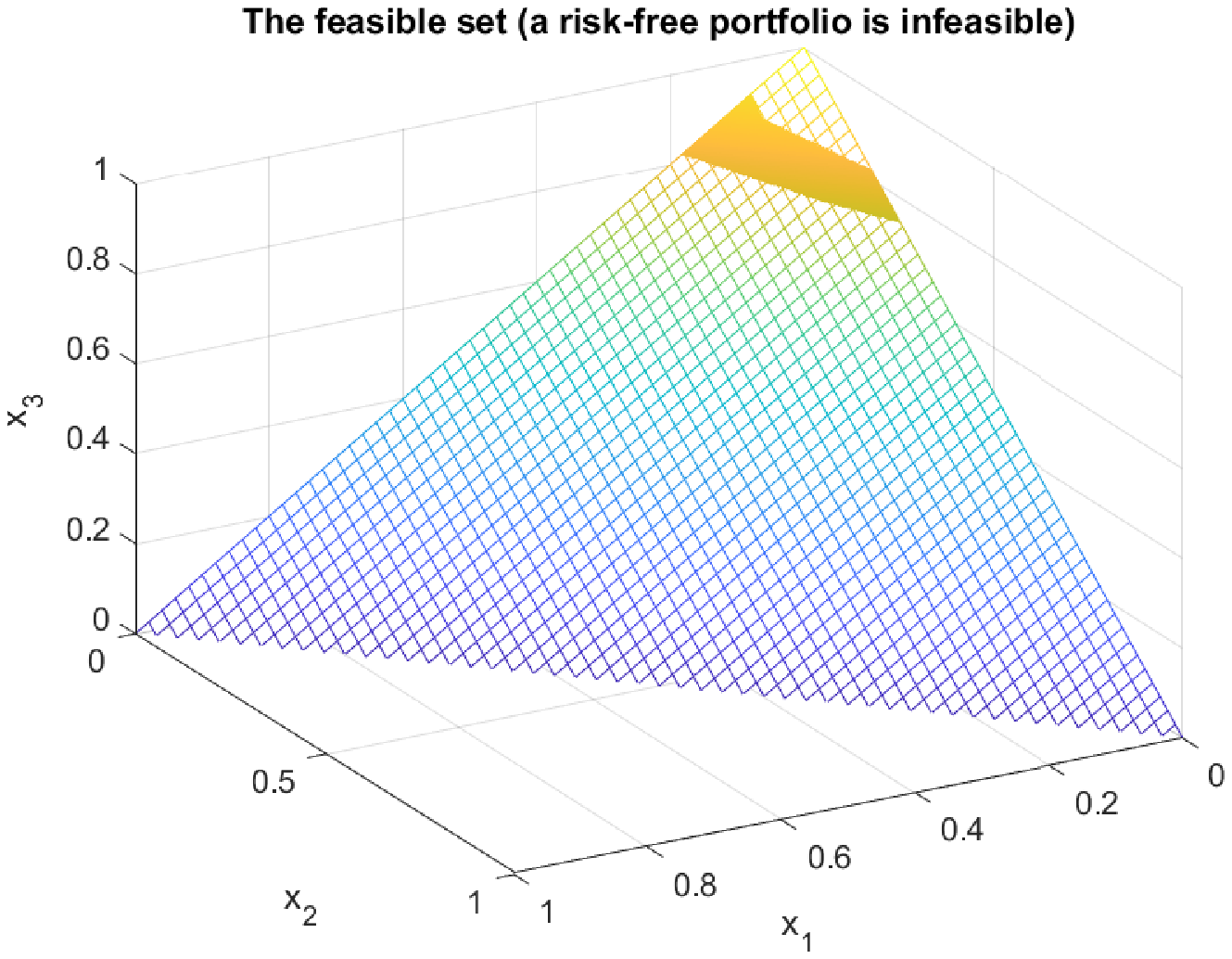}
	\includegraphics[width=0.32\textwidth]{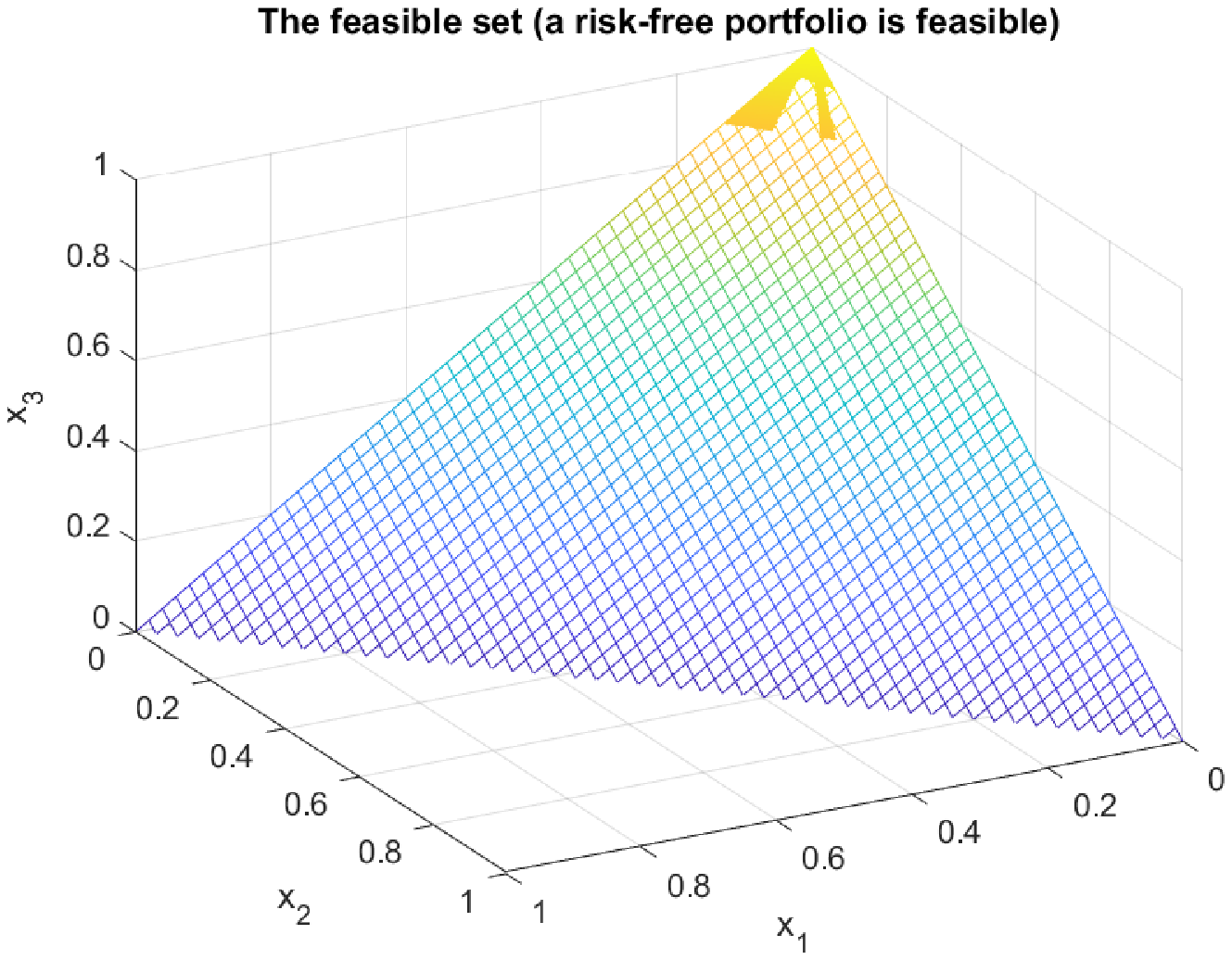}
	\caption{Possible (disconnected and non-convex) shapes of feasible sets under 1\textsuperscript{st} order SDC
	($x_\mathit{ref}= (x_1=0.31,x_2=0.69)$, $\delta=0.05, 0, 0$)
} \label{figfeaset}
\end{figure*}

\subsection{Exact discontinuous penalty functions}
\label{edpf}
To find an optimal solution for problem~\eqref{obj1}--\eqref{sdc1}, we solve the problem
\begin{equation}\label{funcF}
	\max_{x\in X}F_m(x)\coloneqq f_m(x)-c-\max\{0,G_m(x)\},
\end{equation}
where $c\le \max_{x\in X}f_m(x)$, for example, $c=f_m(x^0)$ for some feasible $x^0$.
Obviously, the global maximums of the problems~\eqref{obj1}--\eqref{sdc1} and \eqref{funcF} coincide.

We can further remove the constraint $x\in X$ from problem~\eqref{funcF} by subtracting the exact projective penalty term $\|x-\pi_X(x)\|$, where $\pi_X(x)$ is the projection of~$x$ on~$X$, from the objective function~\eqref{funcF}, 
\begin{equation}\label{funcF2}
	\max_{x\in\mathbb R^n}F_m(x)\coloneqq f_m(\pi_X(x))-c-\max\{0,G_m(\pi_X(x))\}-\|x-\pi_X(x)\|.
\end{equation}

For the problem~\eqref{obj2}--\eqref{sdc2}, the corresponding penalized problem has the form:
\begin{equation}\label{funcPhi}
	\max_{x\in \mathbb R^n} \Phi_m(x)\coloneqq f_m(x)-c-\max\{0,H_m(x)\}-\|x-\pi_X(x)\|.
\end{equation}

\subsection{Exact projective penalty functions}
\label{eppf}
Let $x^0\in X$ be some feasible solution of problem~\eqref{obj1}--\eqref{sdc1}, i.e., $x^0\in X$ and $G_m(x^0)\le 0$.
For any $x\in \mathbb R^n$ denote $x_\lambda= (1-\lambda)x^0+ \lambda\, x$. Define a projection point
\begin{equation}\label{noneuclidproj}
	p_{G_m}(x)=
	\begin{cases}
		x &     \text{if }G_m(x)\le 0,\\
		x_{\lambda_x}& \text{if }G_m(x)>0,
	\end{cases}
\end{equation}
where
\[	\lambda_x=\sup\{\lambda\in [0,1]\colon G_m(x_\lambda)\le 0\}.\]
Now instead of the constrained problem~\eqref{obj1}--\eqref{sdc1} consider the unconstrained problem
\begin{equation}\label{doubleprojection1}
	\max_{x\in\mathbb R^n}F_m(x)\coloneqq f_m\left(p_{G_m}(\pi_X(x))\right)-\|p_{G_m}(\pi_X(x))-\pi_X(x)\|-\|x-\pi_X(x)\|.
\end{equation}

Similarly, instead of constrained problem~\eqref{obj2}--\eqref{sdc2}, we can consider the unconstrained problem
\begin{equation}\label{doubleprojection2}
	\max_{x\in\mathbb R^n}F_m(x)\coloneqq f_m\bigl(p_{H_m}(\pi_X(x))\bigr)-\|p_{H_m}(\pi_X(x))-\pi_X(x)\|-\|x-\pi_X(x)\|.
\end{equation}
Following \citet[Theorem~4.4]{Norkin_2022}, the global maximums of problems~\eqref{obj1}--\eqref{sdc1} and~\eqref{doubleprojection1} coincide, and, if the mapping $p_{G_m}(\pi_X(x))$ is continuous, also local maximums of the both problems coincide, i.e., the optimization problems are equivalent.
The mapping $p_{G_m}(\pi_X(x))$ is continuous, if the feasible set is convex and the projection mapping~\eqref{noneuclidproj} uses an internal point $x^0$ of the feasible set of the considered problem.
For example, a feasible risk-free portfolio can represent such point.
\medskip

\begin{remark}[Computational aspects]
	The calculations of the projections $p_{G_m}(\cdot)$ and $p_{H_m}(\cdot)$ requires finding roots of equations
	$\phi(\lambda)\coloneqq G_m(x_\lambda)=0$ and 
	$\psi(\lambda)\coloneqq H_m(x_\lambda)=0$.
	This requires multiple evaluations of the functions $\phi(\lambda)$ and $\psi(\lambda)$, and hence, multiple construction of  
	$\mathcal F_{x_\lambda,m}(t)=\Pr\{\omega'x_\lambda\le t\}$ or
	$\mathcal Q_{x_\lambda,m}(\alpha)$ for different portfolios~$x_\lambda$.
	This may take considerable time in case of large number of observations~$m$.
	However, in case of a specific feasible point, namely a risk-free (feasible) portfolio, these functions can be easily found through 
	$\mathcal F_{x,m}(t)$ and $\mathcal Q_{x,m}(\alpha)$ as the following statements show.
\end{remark}
\medskip

\begin{proposition}
	Let $x^0= (1,0,\dots,0)$ be a risk-free portfolio with fixed return~$r$, $x\in X$ be an arbitrary portfolio, with the random return $f(x,\omega)=\omega'x$, the return cumulative distribution function $\mathcal F_x(t)$, $t\in\mathbb R$, and the corresponding inverse (quantile) one $\mathcal Q_x(\alpha)$, $\alpha\in [0,1]$,
	$X=\{x\in \mathbb R^n\colon \sum_{i=1}^n x_i=1,\ x_i\ge 0,\ i=1,\dots,n\}$.
	Let us consider a mix portfolio of the form $x_\lambda= \lambda \, x+(1-\lambda)\, x^0$, $\lambda\in [0,1]$.
	Its distribution function and inverse distribution function are expressed through $\mathcal F_x(t)$ and $\mathcal Q_x(\alpha)$ as
	\begin{align}
		\mathcal F_{x_\lambda}(t)&=\mathcal F_{x}\bigl((t-(1-\lambda)r)/\lambda\bigr), \\
		\shortintertext{and}
		\mathcal Q_{x_\lambda}(\alpha)&=\lambda\, \mathcal Q_{x}(\alpha)+(1-\lambda)r.
	\end{align}
\end{proposition}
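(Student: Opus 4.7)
The plan is to compute both formulas directly from the definitions, after rewriting the return of the mix portfolio as an affine function of $\omega' x$.

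First, since $x^0 = (1,0,\dots,0)$ is declared risk-free with return $r$, the random variable $\omega'x^0 = \omega_1$ is almost surely equal to $r$. Therefore the return of $x_\lambda = \lambda x + (1-\lambda)x^0$ is
\[\omega' x_\lambda = \lambda\, \omega'x + (1-\lambda)\, \omega'x^0 = \lambda\, \omega' x + (1-\lambda)\, r\quad \text{a.s.}\]
I would treat the trivial case $\lambda = 0$ separately (in which $x_\lambda = x^0$ and both claimed identities reduce to tautologies), and assume $\lambda \in (0,1]$ for the main argument.

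Next, for the CDF, I would apply the definition of $\mathcal F_{x_\lambda}$ and isolate $\omega'x$:
\[\mathcal F_{x_\lambda}(t) = \Pr\{\omega' x_\lambda \le t\} = \Pr\{\lambda\, \omega' x + (1-\lambda)r \le t\} = \Pr\bigl\{\omega' x \le (t-(1-\lambda)r)/\lambda\bigr\},\]
which is exactly $\mathcal F_x\bigl((t-(1-\lambda)r)/\lambda\bigr)$. This uses only $\lambda > 0$ so that dividing by $\lambda$ preserves the direction of the inequality.

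For the quantile, I would substitute the just-derived identity into the defining supremum and change variables $s = (t-(1-\lambda)r)/\lambda$, i.e.\ $t = \lambda s + (1-\lambda)r$:
\[\mathcal Q_{x_\lambda}(\alpha) = \sup\{t\colon \mathcal F_{x_\lambda}(t)\le \alpha\} = \sup\bigl\{\lambda s + (1-\lambda)r\colon \mathcal F_x(s)\le \alpha\bigr\} = \lambda\,\mathcal Q_x(\alpha) + (1-\lambda)r,\]
where the last equality pulls the affine factor through the supremum (legitimate because $\lambda > 0$, so $s \mapsto \lambda s + (1-\lambda)r$ is increasing and bijective on $\mathbb R$).

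The only subtle point is the one-sided continuity convention for $\mathcal F$ and $\mathcal Q$ used in the paper (the CDF is taken left-continuous via strict inequality in the definition of $\mathcal F_{x,m}$, while $\mathcal Q$ is the upper generalized inverse). I expect this to be the main thing to watch, but it does not affect the computation: the affine, strictly increasing transformation $s \mapsto \lambda s + (1-\lambda)r$ commutes with both $\sup$ and the one-sided limits, so the left/right continuity conventions are preserved on both sides of each identity. No topological subtlety remains once the affine change of variable is carried out.
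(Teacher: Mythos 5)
Your proposal is correct and follows essentially the same route as the paper: a direct computation of the CDF from the probability definition using the affine identity $\omega' x_\lambda = \lambda\,\omega'x + (1-\lambda)r$ a.s., followed by the change of variables $t = \lambda s + (1-\lambda)r$ inside the defining supremum of the quantile. Your explicit handling of the $\lambda=0$ case and the remark on the one-sided continuity conventions are minor refinements the paper omits, but they do not alter the argument.
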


\begin{proof}. Denote $f(x,\omega)=\omega' x$, $f(x_\lambda,\omega)=\omega' x_\lambda$. Then
	\begin{align}
		\mathcal F_{x_\lambda}(t)&= \Pr\{f(x_\lambda,\omega) < t\} \\
		  &=\Pr\{ \omega'(1-\lambda)x^0+ \lambda f(x,\omega)< t\} \\
		  &=\Pr\{ (1-\lambda)r+ \lambda f(x,\omega)< t\} \\
		  &=\Pr\{ f(x,\omega)< (t-(1-\lambda)r)/\lambda\} \\
		  &=\mathcal F_{x}\bigl((t-(1-\lambda)r)/\lambda\bigr).
	\end{align}
	Next, by definition, $\mathcal Q_{x_\lambda}(\alpha)$ is the optimal value of the optimization problem
	\begin{align}
		\max\{& t\in \mathbb R\colon \mathcal F_{x_\lambda}(t)\le \alpha\} \\
		&= \max\{t\in \mathbb R\colon \Pr\{f(x_\lambda,\omega)<t \le \alpha\} \\
		&= \max\{t\in \mathbb R\colon \mathcal F_{x}((t-(1-\lambda)r)/\lambda\le \alpha\}.
	\end{align}
	With the variable $\tau= (t-(1-\lambda)r)/\lambda$, the latter problem is equivalent to
	\begin{equation}
		\max\{ \lambda\tau+(1-\lambda)r\colon \mathcal F_{x_\lambda}(\tau)\le \alpha\}.
	\end{equation}
	The optimal value of this problem equals the expression
	\[	\lambda \sup_{\tau\in\mathbb R}\{\tau\colon \mathcal F_x\left(\tau\right)\le\alpha\}+ (1-\lambda)r = \lambda \mathcal Q_{x}(\alpha)+(1-\lambda)r=\mathcal Q_{x_\lambda}(\alpha), \]
	which completes the proof.
%
\end{proof}

The proposition shows that the quantile function of the average portfolio $x_\lambda$ is the similar average of the return quantile functions of the portfolio $x$ and the risk-free portfolio $x^0$.

Thus, for a known risk-free feasible portfolio $x^0= (1,0,\ldots,0)'$ with a fixed return $r>0$, the calculations can be considerably reduced.
Indeed, we need to calculate only one CDF $\mathcal F_{x,m}(\cdot)$ and can re-use it for the CDFs $\mathcal F_{x_\lambda,m}(\cdot)$ for different values of~$\lambda$.
In case of a discrete reference CDF $\mathcal F_\mathit{ref}(t)$ with jumps points
$(\mathcal T_\mathit{ref},{\cal A}_\mathit{ref})=\{(t_1,\alpha_1),\dots,(t_k,\alpha_k),\dots\}$, the projection $p_{H_m}(x)$ can be found in an analytical form as the following proposition states.
\medskip

\begin{proposition}
Assume that $\mathcal Q_\mathit{ref}(1)< r$, i.e., there exists a risk-free internal feasible portfolio $x^0$ with return $r$.
Then the projection $p_{H_m}(x)$ can be stated in the closed form
$p_{H_m}(x)=(1-\lambda_x)x^0+\lambda_x\, x$,
where
\[	\lambda_x=
	\begin{cases}
		1,& \text{if }H_m(x)\le 0,\\
		\min_{\{\alpha\in\mathcal A_\mathit{ref}:\mathcal Q_x<\mathcal Q_\mathit{ref}\}}
	\frac{\mathcal Q_\mathit{ref}(\alpha)-r}{\mathcal Q_{x}(\alpha)-r},&
		\text{if }H_m(x)>0.
	\end{cases}\]
\end{proposition}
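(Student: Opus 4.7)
The plan is to leverage the previous proposition, which gives the closed-form mixing identity $\mathcal Q_{x_\lambda}(\alpha)=\lambda\,\mathcal Q_x(\alpha)+(1-\lambda)r$ for the risk-free reference point. Substituting this into the definition of $H_m$ converts the feasibility condition $H_m(x_\lambda)\le 0$ into a finite family of one-dimensional linear inequalities in $\lambda$:
\[
\mathcal Q_\mathit{ref}(\alpha)-r\le\lambda\bigl(\mathcal Q_{x}(\alpha)-r\bigr)\qquad\text{for every }\alpha\in\mathcal A_\mathit{ref}.
\]
Then $\lambda_x$ is the supremum of $\lambda\in[0,1]$ for which all of these inequalities hold simultaneously, and the task reduces to solving them explicitly.

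Next, I would do a sign analysis at a fixed $\alpha$. Monotonicity of $\mathcal Q_\mathit{ref}$ together with the assumption $\mathcal Q_\mathit{ref}(1)<r$ yields $\mathcal Q_\mathit{ref}(\alpha)-r<0$, so $x^0$ lies strictly in the interior of the feasible region (which in particular shows that $\lambda=0$ is always admissible). If $\mathcal Q_x(\alpha)\ge r$, the right-hand side is nonnegative while the left-hand side is strictly negative, hence the inequality holds for every $\lambda\in[0,1]$ and contributes no bound. If $\mathcal Q_x(\alpha)<r$, dividing by the negative factor $\mathcal Q_x(\alpha)-r$ reverses the inequality to
\[
\lambda\le\frac{\mathcal Q_\mathit{ref}(\alpha)-r}{\mathcal Q_x(\alpha)-r},
\]
and this bound is a positive number. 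A short comparison shows the bound is strictly less than $1$ exactly when $\mathcal Q_\mathit{ref}(\alpha)>\mathcal Q_x(\alpha)$, i.e., precisely when the original dominance constraint is violated at $\alpha$. Indices $\alpha$ satisfying $\mathcal Q_x(\alpha)<r$ but $\mathcal Q_x(\alpha)\ge\mathcal Q_\mathit{ref}(\alpha)$ yield a bound $\ge 1$ and therefore also do not restrict $\lambda_x$ below $1$.

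Finally, I would assemble the two cases. If $H_m(x)\le 0$, no index produces a binding bound below $1$, so $\lambda_x=1$ and $p_{H_m}(x)=x$. If $H_m(x)>0$, the set $\{\alpha\in\mathcal A_\mathit{ref}\colon \mathcal Q_x(\alpha)<\mathcal Q_\mathit{ref}(\alpha)\}$ is nonempty and finite, so the supremum is attained at the minimum of the ratios $(\mathcal Q_\mathit{ref}(\alpha)-r)/(\mathcal Q_x(\alpha)-r)$ over this set, giving the stated formula. The only real obstacle is the careful bookkeeping of the sign reversal when dividing by $\mathcal Q_x(\alpha)-r$; the hypothesis $\mathcal Q_\mathit{ref}(1)<r$ is exactly what rules out a sign change of the numerator and guarantees that the violated indices produce genuinely binding, positive bounds, yielding the clean closed form.
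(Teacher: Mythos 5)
Your proposal is correct and follows essentially the same route as the paper's proof: both substitute the mixing identity $\mathcal Q_{x_\lambda}(\alpha)=\lambda\,\mathcal Q_x(\alpha)+(1-\lambda)r$ from the preceding proposition, exploit linearity and monotonicity of $\lambda\mapsto \mathcal Q_\mathit{ref}(\alpha)-\mathcal Q_{x_\lambda}(\alpha)$, and take the minimum of the resulting roots over the violated indices. Your version is in fact slightly more careful than the paper's, since you explicitly verify that indices with $\mathcal Q_x(\alpha)\ge \mathcal Q_\mathit{ref}(\alpha)$ impose no bound below $1$ and track the sign reversal when dividing by $\mathcal Q_x(\alpha)-r$.
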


\begin{proof}
	For $H_m(x)\le 0$ we have, by definition, $p_{H_m}(x)=x$.
	So we assume that $H_m(x)>0$.
	Consider the portfolios $x_\lambda= (1-\lambda)x^0+\lambda x$, $\lambda\in[0,1]$
	and the function
	\begin{align}
		h_x(\alpha,\lambda)&=\mathcal Q_\mathit{ref}(\alpha)-\mathcal Q_{x_\lambda}(\alpha)\\
		&=\mathcal Q_\mathit{ref}\bigl(\alpha)-r-\lambda(\mathcal Q_{x}(\alpha)-r\bigr).
	\end{align}
	For $\alpha\in\mathcal A_\mathit{ref}$ such that $\mathcal Q_x(\alpha)< \mathcal Q_\mathit{ref}$ it holds that
	$h_x(\alpha,1)>0$ and $h_x(\alpha,0)<0$ and the function $h_x(\alpha,\cdot)$ is linear strictly monotone.
	So the projection corresponds to the minimal $\lambda$ such $h_x(\alpha,\lambda)\ge 0$, that is to $\lambda_x$,
	\[	\lambda_x=\min_{\{\alpha\in\mathcal A_\mathit{ref}\colon \mathcal Q_x<\mathcal Q_\mathit{ref}\}}\frac{\mathcal Q_\mathit{ref}-r}{\mathcal Q_x-r}, \]
	which completes the proof.
\end{proof}

The proposition shows that given the conditions, the feasible set 
$\{x\in X\colon H_m(x)\le 0\}$
has a star shape with respect to the feasible point that represents a risk-free portfolio.

If $\mathcal Q_x(\alpha)$, $\alpha\in{\cal A}_\mathit{ref}$, are continuous functions in~$x$,
then in conditions of the proposition, $\lambda_x$ is continuous, hence the projection mapping
$p_m(x)=\lambda_x x+(1-\lambda_x)x^0$ is also continuous. Then by \cite[Theorem~4.4]{Norkin_2022}
the problems~\eqref{obj2}--\eqref{sdc2} and~\eqref{doubleprojection2} are equivalent.

\section{Numerical Optimization of the Exact Penalty Functions}
\label{sec:Numerical}
In this section, we consider a numerical method for the optimization of generally discontinuous functions $G_m(x)$, $H_m(x)$, $F_m(x)$ in~\eqref{funcG}--\eqref{doubleprojection2}. 
The idea consists in sequential approximations of the original function by smooth (averaged) ones and optimizing the latter by stochastic optimization methods. 
For this we develop stochastic finite-difference estimates of gradients of the smoothed functions.
Although the successive smoothing method has certain global optimization abilities (as discussed in \cite{Norkin_2020}), to strengthen this property we imbed it into some branch and bound scheme as a local optimizer.

\subsection{Averaged Functions}
We limit the consideration to the case of the so-called strongly lower semicontinuous functions.

\begin{definition}[Strongly lower semicontinuous functions, \cite{Ermoliev_Norkin_Wets_1995}]
	A function $F\colon \mathbb R^n\to \mathbb R$ is called 
	\emph{lower semicontinuous (lsc)} at a point $x$, if
	\[	\liminf_{\nu \to \infty} F(x^\nu)\ge F(x)\]
	for all sequences $x^k\to x$.  
	
	A function $F\colon \mathbb R^n\to \mathbb R$ is called \emph{strongly lower semicontinuous} (strongly lsc) at a point $x$, if it is lower semicontinuous at $x$ and there exists a sequence $x^k\to x$ such that it is continuous at $x^k$ (for all $x^k$) and $F(x^k)\to F(x)$.
	A function $F$ is called strongly lower semicontinuous (strongly lower semicontinuous) on $X\subseteq \mathbb R^n$, if this is the case for all $x\in X$.
\end{definition}

The property of strong lower semicontinuity is preserved under continuous transformations.

The averaged functions obtained from the original nonsmooth or discontinuous function by convolution with some kernel have smoother characteristics.
For this reason, they are often used in optimization theory (see \cite{Gupal_Norkin_1977, Ermoliev_Norkin_Wets_1995} and references therein).

\begin{definition}
	The set (family) of bounded and integrable functions
	$\{\psi_\theta \colon \mathbb R^n\to R_+,\ \theta \in \mathbb R_+\}$ satisfying for any $\epsilon>0$ the conditions
	\[	\lim_{\theta\rightarrow 0}\int_{\epsilon\,{\bf B}}\psi_\theta(z)dz=1, \quad
		{\bf B}\coloneqq \{x\in \mathbb R^n\colon \|x\|\le 1\},\]
	is called a family of mollifiers.
	The kernels $\{\psi_\theta\}$ are said to be smooth if the functions $\psi_\theta(\cdot)$ are continuously differentiable.
	
	A function $F\colon \mathbb R^n\to \mathbb R^1$ is called bounded at infinity if there are positive numbers~$C$ and~$r$ such that $|F(x)|\le C$ for all~$x$ with $\|x\|\ge r$.
\end{definition}

Given a locally integrable, bounded at infinity, function $F\colon  \mathbb R^n\to 
\mathbb R^1$ and a family of smoothing kernels $\{\psi_\theta\}$, the associated family of averaged functions $\{F_\theta\colon \theta\in \mathbb R_+\}$ is
\begin{equation}\label{n10}
	F_\theta(x)\coloneqq \int_{\mathbb R^n} F(x+z)\psi_\theta(z)dz
		= \int_{\mathbb R^n} F(z)\psi_\theta(z-x)dz.
\end{equation}

Smoothing kernels can have an unlimited support 
$\operatorname{supp} \psi_\theta= \{x\colon\psi_\theta(x)>0\}$.
To ensure the existence of the integrals~\eqref{n10}, we assume that the function~$F$ is bounded at infinity.
We can always assume this property if we are interested in the behavior of~$F$ within some bounded area.
If $\operatorname{supp} \psi_\theta\to 0$ for $\theta\to 0$, then this assumption is superfluous.

For example, a family of kernels can be as follows.
Let~$\psi$ be some probability density function with bounded support $\operatorname{supp} \psi$, a positive numerical sequence $\{\theta_\nu\colon \nu= 1,2,\dots\}$ tending to~$0$ as $\nu \to \infty$.
Then the smoothing kernels on~$\mathbb R^n$ can be taken as
\[	\psi_{\theta_\nu}(z)\coloneqq \frac1{(\theta_\nu)^n}\psi(z/\theta_\nu).\]

If the function~$F$ is not continuous, then we cannot expect the averaged functions $F_\theta(x)$ to converge to~$F$ uniformly.
But we don't need that.
We need such a convergence of the averaged functions $F_\theta(x)$ to~$F$ that guarantees the convergence of the minima of~$F_\theta(x)$ to the minima of~$F$.
This property is guaranteed by the so-called epi-convergence of functions.

\begin{definition}[Epi-convergence, cf.\ \citet{WetsRockafellar97}]
	A sequence of functions 
	$\{F^\nu\colon \mathbb R^n\to \bar{\mathbb R},\ \nu \in \mathbb N\}$ 
	epi-converges to a function $F\colon \mathbb R^n\to \bar{\mathbb R}$ at a point~$x$, iff
	\begin{enumerate}
		\item $\underset{\nu \to \infty }\liminf F^\nu(x^\nu)\ge F(x)$
		for all $x^\nu\to x$;
		\item $\underset{\nu \to \infty}\liminf F^\nu(x^\nu)=F(x)$
			for some sequence $x^\nu\to x$. The sequence $\{F^\nu\}$ epi-converges to $F$, if this is the case at every point $x\in \mathbb R^n$.
	\end{enumerate}
\end{definition}

\begin{theorem}	[Epi-convergence of avraged functions, cf.\ \cite{Ermoliev_Norkin_Wets_1995}] \label{averconvergence}
	For a strongly semicontinuous locally integrable function
	$F\colon\mathbb R^n\to \mathbb R^1$, any associated sequence of averaged functions 
	$\{F^\nu=F_{\theta_\nu}\colon \theta_\nu \in \mathbb R_+\}$ epi-converges to~$F$ as 
	$\theta_\nu\downarrow 0$.
\end{theorem}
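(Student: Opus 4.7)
The plan is to verify the two defining conditions of epi-convergence separately, one giving a lower bound on $\liminf F^\nu(x^\nu)$ along arbitrary sequences $x^\nu\to x$, and the other constructing a recovery sequence realizing the value $F(x)$ in the limit. The standard upper condition requires $\limsup_\nu F^\nu(x^\nu)\le F(x)$ along some $x^\nu\to x$, which is what I will establish in place of the literal statement. Throughout I will exploit two ingredients: the mollifier concentrating mass near the origin ($\int_{\varepsilon\mathbf B}\psi_{\theta_\nu}\to 1$), and the strong lower semicontinuity of $F$, which provides both the pointwise lsc bound and a set of points of continuity dense enough to reconstruct $F(x)$ from below.

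For the liminf inequality, fix $\varepsilon>0$. By lsc of $F$ at $x$, there is $\delta>0$ such that $F(y)\ge F(x)-\varepsilon$ whenever $\|y-x\|\le\delta$. For $\nu$ large enough that $\|x^\nu-x\|\le\delta/2$, any $z$ with $\|z\|\le\delta/2$ satisfies $\|x^\nu+z-x\|\le\delta$, hence
\begin{equation}
F_{\theta_\nu}(x^\nu)
\ge (F(x)-\varepsilon)\int_{\|z\|\le\delta/2}\psi_{\theta_\nu}(z)\,dz
\;+\;\int_{\|z\|>\delta/2}F(x^\nu+z)\,\psi_{\theta_\nu}(z)\,dz.
\end{equation}
The mollifier property sends the first integral to $1$. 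For the tail term, the boundedness-at-infinity hypothesis together with the fact that lsc functions are bounded below on compacta produces a global lower bound $-M$ for $F$ on a neighborhood of $\{x^\nu+z:\|z\|>\delta/2\}$; since $\int_{\|z\|>\delta/2}\psi_{\theta_\nu}\to 0$, the tail contribution vanishes. Letting $\nu\to\infty$ and then $\varepsilon\downarrow 0$ gives $\liminf_\nu F_{\theta_\nu}(x^\nu)\ge F(x)$.

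For the recovery sequence I invoke strong lower semicontinuity: choose $y^k\to x$ with $F$ continuous at each $y^k$ and $F(y^k)\to F(x)$. Continuity of $F$ at $y^k$ together with the mollifier property implies $F_{\theta}(y^k)\to F(y^k)$ as $\theta\downarrow 0$ (the same split as above, but now $F(y^k+z)\to F(y^k)$ on the inner ball). A standard diagonal extraction then produces, for each $\nu$, an index $k(\nu)$ with $\|y^{k(\nu)}-x\|<1/\nu$ and $|F_{\theta_\nu}(y^{k(\nu)})-F(y^{k(\nu)})|<1/\nu$; setting $x^\nu\coloneqq y^{k(\nu)}$ yields $x^\nu\to x$ and $F_{\theta_\nu}(x^\nu)\to F(x)$, establishing the matching upper bound.

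The main obstacle I anticipate is controlling the tail $\int_{\|z\|>\delta/2}F(x^\nu+z)\,\psi_{\theta_\nu}(z)\,dz$ when the mollifiers have unbounded support, because $F$ itself is only assumed lsc and locally integrable and may fail to be globally bounded. The hypothesis that $F$ is bounded at infinity is precisely what is needed, but one must be careful that the mollifier's total mass outside a fixed ball genuinely tends to zero and that $F$ is bounded below on the relevant compacta; these two facts must be combined carefully for $\nu$ large. Once this tail estimate is in hand, the remainder of the argument is routine.
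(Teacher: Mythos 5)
The paper does not actually prove this theorem: it is imported by citation from \citet{Ermoliev_Norkin_Wets_1995}, so there is no in-paper argument to compare yours against. Judged on its own, your two-part strategy (lower semicontinuity plus concentration of the mollifier for the liminf bound; points of continuity supplied by \emph{strong} lower semicontinuity plus a diagonal argument for the recovery sequence) is exactly the standard proof of this result, and your liminf half is sound: on the inner ball you only need the one-sided bound $F\ge F(x)-\varepsilon$, and on the tail you only need a \emph{lower} bound on $F$, which lower semicontinuity on compacta together with boundedness at infinity provides. Replacing the paper's literal condition (ii) by the usual limsup condition is also harmless, since combined with (i) it yields $\lim_\nu F^\nu(x^\nu)=F(x)$ along the recovery sequence.

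The genuine gap is in the tail estimate for the recovery sequence. There you must show $\int_{\|z\|>\delta_k}F(y^k+z)\,\psi_{\theta_\nu}(z)\,dz\to 0$, and unlike the liminf step this requires an \emph{upper} bound on $F$ over the bounded portion $\{z\colon \|z\|>\delta_k,\ \|y^k+z\|<r\}$ of the tail region. Lower semicontinuity gives no such bound, and local integrability alone does not rescue you unless $\sup_{\|z\|>\delta_k}\psi_{\theta_\nu}(z)\to 0$, which the paper's definition of a mollifier family (only $\int_{\epsilon\mathbf B}\psi_\theta\to 1$) does not guarantee; for the same reason, even the claim that the mass outside a fixed ball tends to zero implicitly uses that the kernels are probability densities, which the definition never states. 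The gap closes under any of the hypotheses actually in force in the sources: in \citet{Ermoliev_Norkin_Wets_1995} the mollifiers have supports shrinking to the origin, so the tail integral is eventually identically zero, and in the paper's Gaussian discussion $F$ is assumed globally bounded, so the tail is dominated by $\sup|F|$ times the vanishing tail mass. You should either invoke one of these extra assumptions explicitly or note that the theorem, read against the paper's literal definitions, needs one of them.
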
 

Note that in the optimization problem without constraints the theorem states that $\lim_\nu\left( \inf_xF^\nu \right)=\inf_xF$.

To optimize discontinuous functions, we approximate them with averaged functions.
The convolution of a discontinuous function with the corresponding kernel (probability density) improves analytical properties of the resulting function, but increases the computational complexity of the problem, since it transforms the deterministic function into an expectation function, which is a multidimensional integral.
Therefore, such an approximation makes sense only in combination with the corresponding stochastic optimization methods.
First, we consider conditions of continuity and continuous differentiability of the averaged functions.

We can also consider smoothed functions obtained by employing differentiable kernel with unbounded support. For example, let the kernel be the Gaussian probability density, i.e.,
\[	\psi(y)= (2\pi)^{-n/2}\,e^{-\|y\|^2/2}.\]
Let us consider the family
\[	F_\theta(x)=\int_{\mathbb R^n}F(x+\theta y)\,\psi\left(y\right)dy 
=\frac{1}{\theta^n}\int_{\mathbb R^n}F(z)\,\psi\left(\frac{z-x}\theta \right)dz,\quad \theta >0,\]
of averaged functions.
Suppose that~$F$ is globally bounded (one may even assume that $|F(x)|\le \gamma_1+\gamma_2\|x\|^{\gamma_3}$ with some non-negative constants $\gamma_1$, $\gamma_2$ and $\gamma_3$).
Then for the strongly lsc function~$F$, the average functions $F_\theta$ epi-converge to~$F$ as $\theta\downarrow 0$ and each function $F_\theta$ is analytical with gradient
\begin{eqnarray}
	\nabla F_\theta(x)&=&
	\frac{1}{\theta^{n+2}}\int_{\mathbb R^n}F(z)\psi \left(\frac{z-x}\theta \right)(z-x)\,dy 
	=\frac{1}\theta\int_{\mathbb R^n}F(x+\theta y)\,\psi(y)\,y\,dy\\
	&=&- \frac{1}\theta\int_{\mathbb R^n}F(x-\theta y)\,\psi(y)\,y\,dy
	=\frac{1}\theta\int_{\mathbb R^n}[F(x+\theta y)-F(x)]\,\psi(y)\,y\,dy=\\
	&=& \frac{1}{2\theta}\int_{\mathbb R^n}[F(x+\theta y)-F(x-\theta y)]\,\psi(z)\,z\,dz
\end{eqnarray}
or
\begin{equation} \label{n13}
	\nabla F_\theta(x)=\E_\eta\frac{1}{\theta}[F(x+\theta \eta)-F(x)]\eta
	= \E_\eta\frac{1}{2\theta}[F(x+\theta \eta)-F(x-\theta\eta)]\eta,
\end{equation}
where the random vector $\eta$ has the standard normal distribution and $\E_\eta$ denotes the mathematical expectation over $\eta$.
Thus, the random vector
\begin{equation}\label{n14}
	\xi_\theta(x,\eta) =\frac{\eta}{2\theta}[F(x+\theta\eta) - F(x-\theta\eta)]
\end{equation}
with the (Gaussian) random variable~$\eta$ is an unbiased statistical estimate of the gradient $\nabla F_\theta(x)$.

\subsection{Stochastic Methods for Minimization of Discontinuous Penalty Functions} \label{sssm}
Consider a problem of constrained minimization of a generally discontinuous function 
subject to a box or other convex constraints.
The target problems are~\eqref{funcG}--\eqref{doubleprojection2}.
 
Such problems can be solved, e.g., by collective random search algorithms. 
In this section we develop stochastic quasi-gradient algorithms to solve these problems. 

A problem of constrained optimization can be reduced to the problem of unconstrained optimization of a coercive function~$F(x)$ by using nonsmooth or discontinuous penalty functions as described in \cite{Galvan_et_al_2021, Norkin_2020, Norkin_2022} (for the case of the present paper see Section~\ref{sec:exactPF}). 

Suppose the function~$F(x)$ is strongly lower semicontinuous.
In view of Theorem~\ref{averconvergence}
it is always possible to construct a sequence of smoothed averaged functions $F_{\theta_\nu}$ that epi-converges to~$F$.
Due to this property, 
global minima of $F_{\theta_\nu}$ converge to the global minima of~$F$ as $\theta_\nu\to 0$.
Convergence of local minima was studied in~\cite{Ermoliev_Norkin_Wets_1995}.

Let us consider some procedures for optimizing function $F$  using approximating averaged functions $F_{\theta_\nu}$.

Suppose one can find the global minima $\{x^\nu\}$ of functions $F_{\theta_\nu}$, $\nu= 0,1,\ldots$.
Then any limit point of the sequence $\{x^\nu\}$ is a global minimum of the function~$F$.
However, finding global minima of $F_{\theta_\nu}$ can be a quite difficult task, so consider the following method.

\paragraph{{\bf The Successive Stochastic Smoothing Method}, cf.\ \cite{Norkin_2020}}
The method sequentially minimizes a sequence of smoothed functions $F_{\theta_\nu}$ with decreasing smoothing parameter $\theta_\nu\downarrow 0$.
Here, the sequence of approximations $x^\nu$ is constructed by implementing the following steps (cf.\ \cite{Norkin_2020}).
\smallskip

\emph{The successive smoothing method as a local optimizer.}
\begin{enumerate}
	\item Fix a decreasing sequence of smoothing parameters $\{\theta_\nu\}$ with sufficiently large initial value $\theta_1$.
	Select a starting point $x^0$.
	\item \label{enu:ii} For a fixed smoothing parameter $\theta_\nu$, $\nu\ge 1,$ minimize the smoothed function $F_{{\theta }_\nu}$ by some stochastic optimization method with the use of the initial point $x^{\nu-1}$ and finite-difference stochastic gradients~\eqref{n14} to find the next approximation $x^\nu$. 
	\item Set $\nu\coloneqq\nu+1$ and return to step~\ref{enu:ii} unless a stopping criterion is fulfilled.
\end{enumerate}
\medskip

For the minimization of the smoothed function $F_{\theta_\nu}$ under fixed $\theta_\nu$, one can apply any stochastic finite-difference optimization method based on the finite difference representations~\eqref{n13}, \eqref{n14} of the gradients of the smoothed functions. 
Some stopping rules for stochastic gradient methods are discussed in \cite{Pflug_1988}.
Asymptotic convergence of such methods to critical points of $F_{\theta_\nu}(x)$ was studied in \cite{Gupal, Mikhalevich_Gupal_Norkin_1987, Polyak_1987}.
If $\nabla F_{\theta_\nu}(x^\nu)\rightarrow 0$, then, by results of \cite{Ermoliev_Norkin_Wets_1995}, the constructed sequence $x^\nu$ asymptotically converges to the set, which satisfies necessary optimality conditions for~$F$.
If this method is applied sequentially to a sequence of smoothed functions $F_{\theta _\nu}$ with $\theta_\nu\downarrow 0$  it can approach to the global minima of~$F$ (cf.\ \cite{Norkin_2020}).

This method requires estimating gradients $\nabla F_{\theta_\nu}(x)$ during the iterative optimization process for a smooth averaged function $F_{\theta_\nu}$ to set up stopping rules.
In general, this is a rather complicated and time-consuming procedure that requires  calculation of multidimensional integrals.
However, such asymptotically consistent estimates can be constructed in parallel with the construction of the main minimization sequence by using the following so-called averaging procedure \cite{Ermoliev_1976, Gupal, Mikhalevich_Gupal_Norkin_1987}.

Consider the stochastic optimization procedure
\begin{align}
	x^{k+1}&= x^k- \rho_k\, z^k,\quad z^0= \xi_0(x^0),\quad x^0\in \mathbb R^n,\\
	z^{k+1}&= z^k- \lambda_k\big(z^k- \xi_k(x^k, \eta^k)\big),\quad k=0,1,\ldots
\end{align}
for iterative optimization of a function $F_\theta(x)$ and parallel evaluation of its gradients $\nabla F_\theta(x)$,
where the vectors $\xi_k(x^k,\eta)$ are given by~\eqref{n14}. For the conditional expectations it holds that
$\E \bigl(\xi_k(x^k,\eta^k)\mid x^k\bigr)= \nabla F_\theta(x^k)$.
Let numbers $\rho_k$, $\lambda_k$ satisfy conditions
\[	0\le \lambda_k\leq 1,\ \ \lim_k\lambda_k= 0,\ \
\sum_{k=0}^\infty\lambda_k= +\infty,      
\ \ \sum\limits_{k=0}^\infty\lambda_k^2< +\infty,\ \
	\underset{k}{\lim}\,\frac{\rho_k}{\lambda_k} =0.\]
Then, with probability one, it holds (cf.\ \citet[Theorem~V.8]{Ermoliev_1976}) that
\[	(z^k- \nabla F_\theta(x^k)\to 0 \text{ as }k\to \infty.\]

In practical calculations we use a certain version of the successive smoothing method (cf.\ \cite{Norkin_2020}) with the finite-difference stochastic gradients~\eqref{n13} and~\eqref{n14}. 
In the method, we do not estimate the gradients $\nabla F_{\theta_\nu}(x^\nu)$ but just fix a number of smoothing steps~$N$ and take diminishing smoothing parameters $\theta_\nu= \theta_1(1-(\nu-1)/N)$, $\nu=1,\ldots,N$.
For each fixed parameter $\theta_\nu$ we allocate some fixed number (a portion of~$N$, e.g., $\approx\sqrt N$) of steps of a chosen stochastic gradient method, e.g., Nemirovski-Yudin method with the trajectory averaging \cite{Nemirovsky_Yudin_1983, Nemirovski2009}.
To mitigate the influence of discontinuities, we normalize decent directions of the stochastic gradient method.
The transition between smoothing stages is done by a Gelfand-Tsetlin-Nesterov step \cite{Gelfand_Tsetlin_1962, Nesterov_1983}.
Figure~\ref{fig1} gives a graphical illustration of the performance of the successive smoothing method on problem~\eqref{funcF2}.

\begin{figure*}[ht]
	\includegraphics[width=0.49\textwidth]{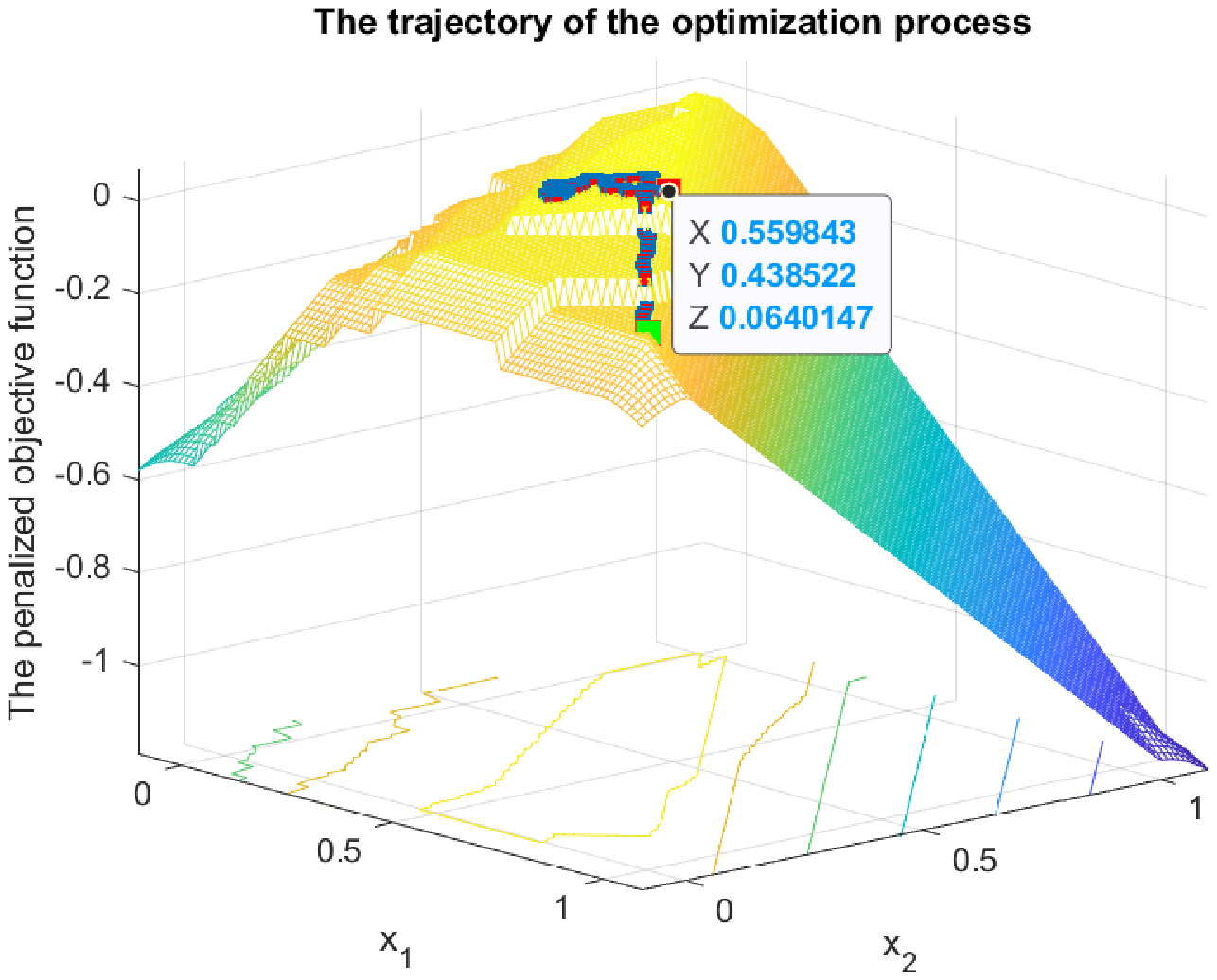}
	\includegraphics[width=0.49\textwidth]{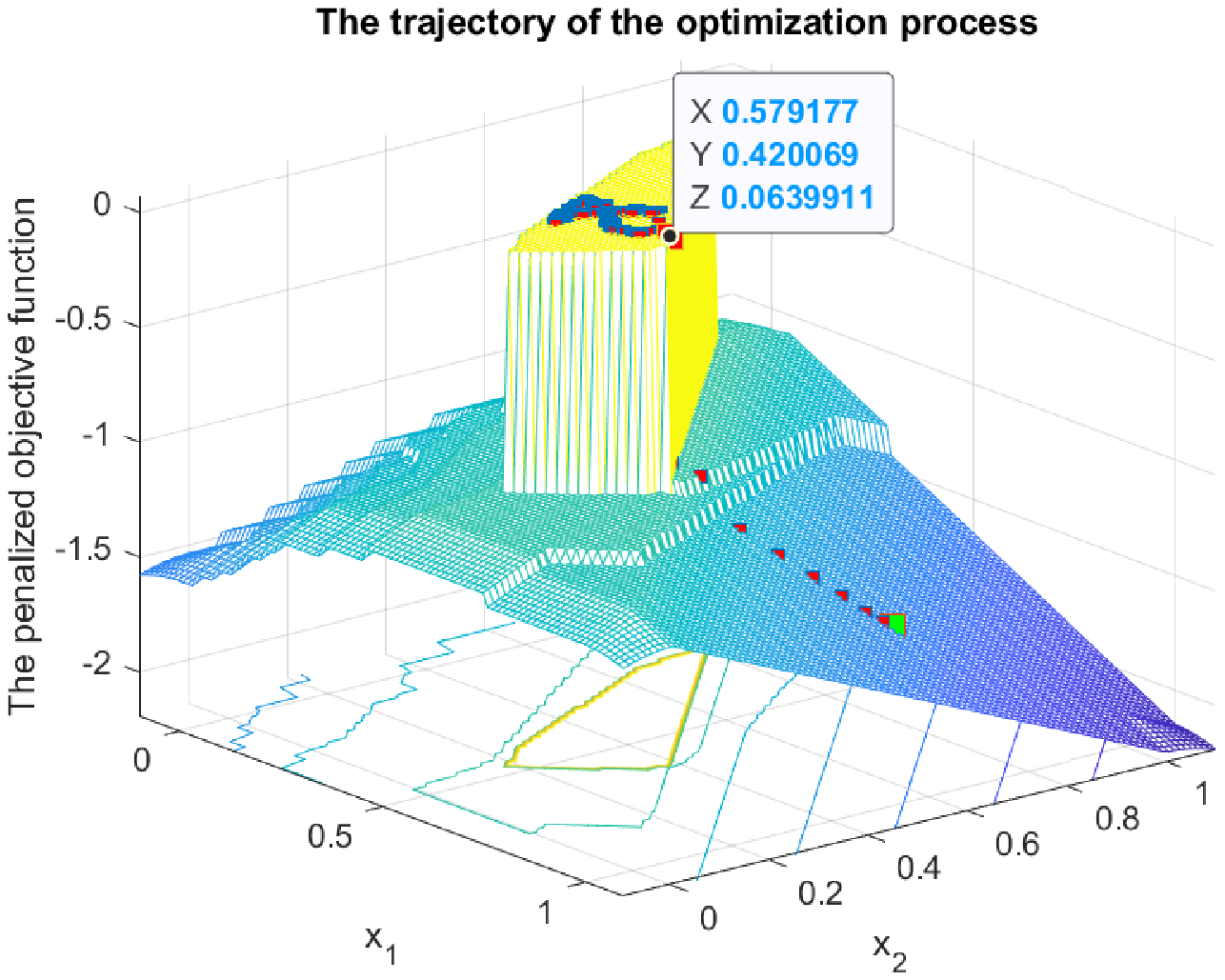}
	\caption{\label{fig1}Illustration of the smoothong method performance on two asset (1,2) portfolio selection. Examples of the trajectories of the method for different discontinuous penalties. 
$x_\mathit{ref}=(x_1=0.3,x_2=0.7)$, $\delta=0.05$.}
\end{figure*}

\paragraph{\bf Global optimization issues} The optimization problems under consideration~\eqref{obj1}\nobreakdash--\eqref{sdc1} and~\eqref{obj2}--\eqref{sdc2} are challenging, nonconvex and multi-extremal, discontinuous and constrained.
So we apply different techniques to solve them.

To remove discontinuous constraints, we use exact discontinuous penalty functions, and to remove structural portfolio constraints, we apply exact projective penalty functions, cf.\ \cite{Galvan_et_al_2021, Norkin_2020, Norkin_2022}.

In the case of multi-extremal problems (as in Figure~\ref{fig2}) we employ a version of the branch and bound method \cite{Norkin_2022} with the successive smoothing method \cite{Norkin_2020} as a local minimizer (in \cite{Norkin_2022} the successive quadratic approximation method was used as a local optimizer).

To solve problem~\eqref{funcG}--\eqref{doubleprojection2}, we apply the following branch and bound (cut) algorithm.
\medskip

\paragraph{\bf The Branch \& Bound algorithm}
\begin{description}
	\item[Initialization.]
	Set the initial partition $\mathcal P_0=\{X\}$, select a random starting point $\tilde x^0\in X$ and apply some \emph{local optimization algorithm} $\mathcal A$ to the problem under consideration.
	As result we find a better point $\bar x^0\in X$ such that
	$F(\bar x^0)<F(\tilde x^0)$.
	Set the B\&B iteration count $k=0$. Set tolerances $\epsilon>0$ and $\delta>0$.

	\item[B\&B iteration.]Suppose at iteration~$k$ we have partition $\mathcal P_k= \{X_i\colon i=1,\dots,N_k\}$ of the set  $X=\cup_{i=1}^{N_k}X_i$ consisting of smaller boxes $X_i$.
	For each $X_i$, there is a known feasible point $\bar x^i\in X_i$ and the corresponding value $F(\bar x^i)$, $V_k=\min_{1\le i\le N_k}F(\bar x^i)$.
	Set $\mathcal P_{k+1}= \emptyset$.\\[0.4em]
	For each such set $X_i\in \mathcal P_k$
	choose  a random starting point $\tilde x^i$ and apply some \emph{local optimization algorithm} ${\cal A}$ to the problem $\min_{x\in X_i}F(x)$ to find a better point $\bar{\bar x}^i\in X_i$,
	$F(\bar\bar x^i)< F(\tilde x^i)$.\\[0.4em]
	If the values $F(\bar x^i)$ and $F(\bar{\bar x}^i)$ are sufficiently different, say $\|F(\bar{x}^i)-F(\bar{\bar{x}}^i)\|\ge\epsilon$, or
	the points $\bar{\bar{x}}^i$ and $\bar{x}^i$ are sufficiently distinct, 
	$\|\bar{\bar{x}}^i-\bar{x}^i\|\ge\delta$, we subdivide the box 
	$X_i= X^\prime_i\cup X^{\prime\prime}_i$ into two subboxes
	$X^\prime_i$ and $X^{\prime\prime}_i$ so that
	$\bar x^i\in X^\prime_i$ and $\bar{\bar x}^i\in X^{\prime\prime}_i$. 
	In this case, the partition $\mathcal P_{k+1}$ is updated by adding the successors 
	$X^\prime_i$ and $X^{\prime\prime}_i$, i.e.,
	Otherwise, if the values $F(\bar x^i)$ and $F(\bar{\bar x}^i)$ and points
	$\bar{\bar x}^i$ and $\bar x^i$ are close, the set $X_i\ni \bar x^i$ goes unchanged to the updated partition $\mathcal P_{k+1}\coloneqq\mathcal P_{k+1}\cup X_i$.\\[0.4em]
	When all elements $X_i\in \mathcal P_k$ are checked, i.e., the new partition 
	$\mathcal P_{k+1}$ with elements $X_i, i=1,\dots,N_{k+1}$, and points $\bar x_i\in X_i$
	has been constructed, we modify the value achieved,
	$V_{k+1}=\min_{1\le i\le N_{k+1}} F(\bar x^i)$.
	\item[Check for stop.] If the progress of the B\&B method becomes small, e.g., $V_k-V_{k+1}$ 
		(or $V_{k-1}-V_{k+1}$, etc.) is sufficiently small, otherwise, repeat the B\&B iteration.
\end{description}
\medskip

\begin{remark}
	The function values $F(\bar x^i)$ of the objective provide upper bounds for the optimal values
	$F^*_i=\min_{x\in X_i}F(x)$.
	If there are known lower bounds $L_i\le F^*_i$, then the subsets $X_i\in\mathcal P_k$ such that $L_i\ge V_k$ can be safely ignored, i.e., excluded from the current partition $\mathcal P_k$.
	Heuristically, if some set $X_i$ remains unchanged during several {\it B\&B} iterations, it can be ignored in the future iterations.
	Further results of the B\&B algorithm described above are available in \cite{Norkin_2022}.
\end{remark}
 
\section{Numerical illustration}
For the numerical illustration of the algorithm proposed we return to portfolio optimization under 1\textsuperscript{st} order stochastic dominance constraints. We use a small data set of annual returns of nine US companies from \citet[Table~1, page~13]{Markowitz_1959} (see the appendix).

\begin{figure*}[ht]
	\includegraphics[width=0.49\textwidth]{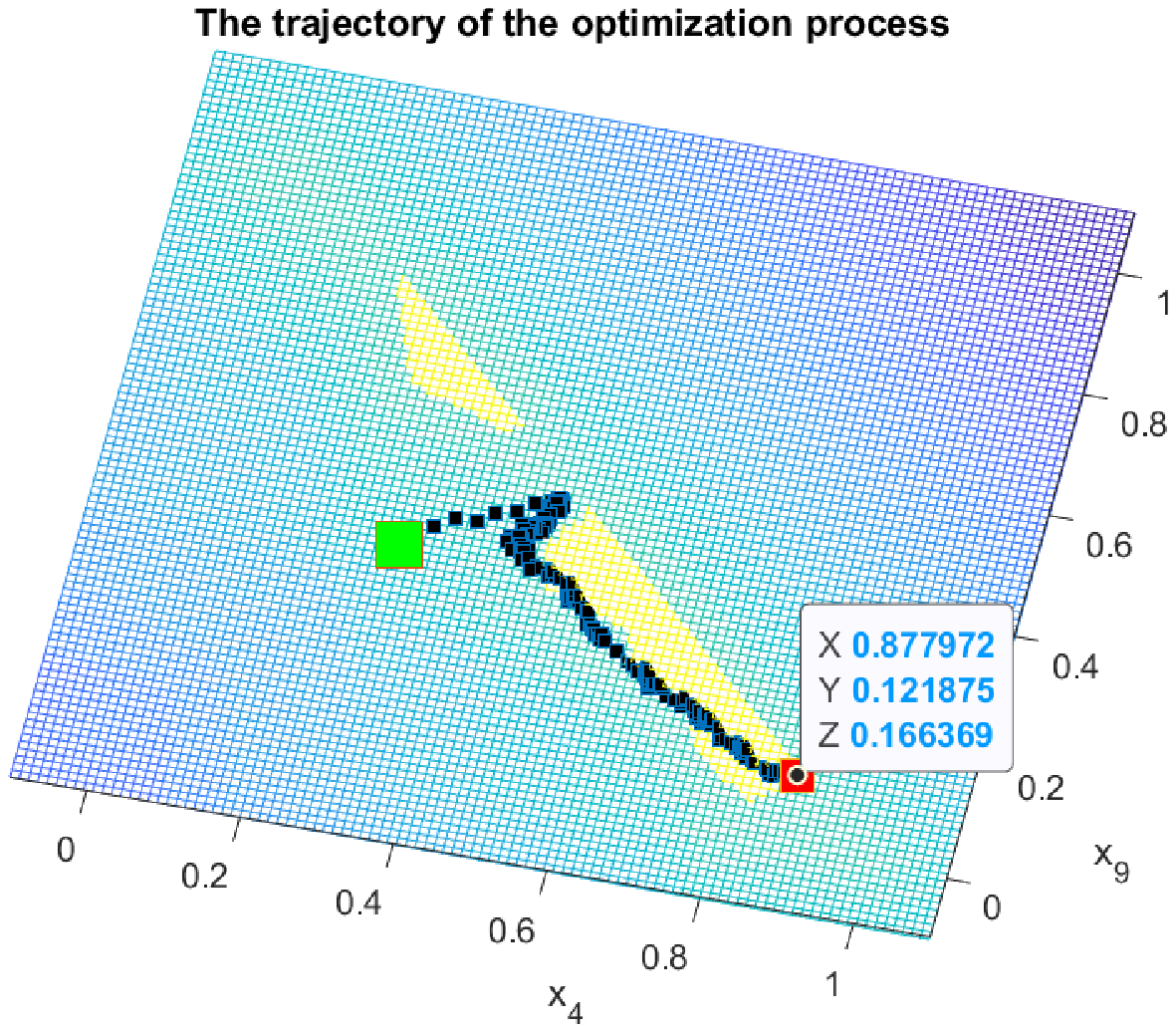}
	\includegraphics[width=0.49\textwidth]{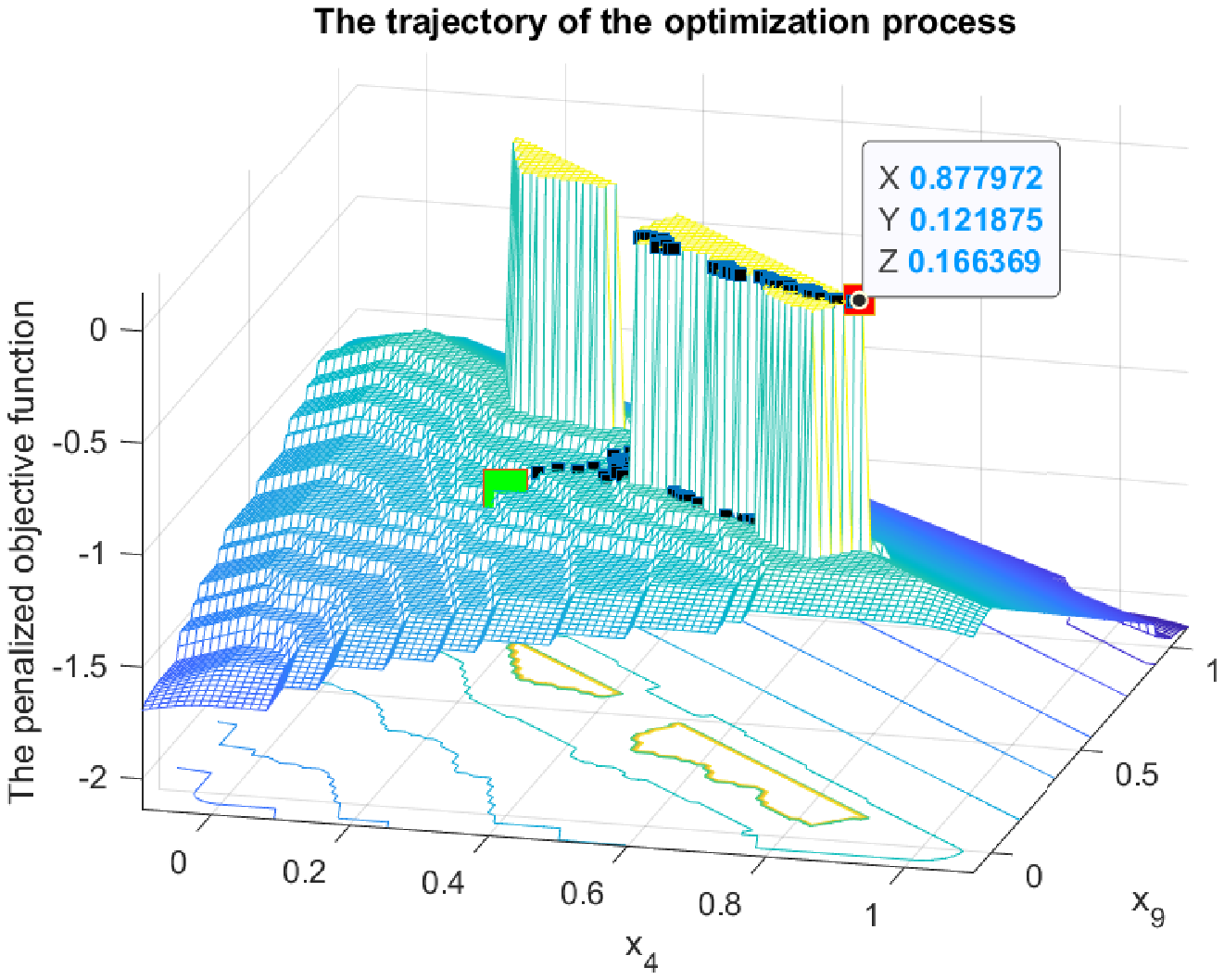}
	\caption{Illustration of the smoothing method performance on two asset (4,9) portfolio selection. Example of the trajectory of the method for non-connected feasible set.
	$x_\mathit{ref}=(x_4=0.3,x_9=0.7)$, $\delta=0.05$. The green is the starting point, the red is the final one.}
	\label{fig2}       
\end{figure*}

\subsection{Testing the successive smoothing method on the discontinuous portfolio optimization problems}
First, let us illustrate the proposed approach on a two-dimensional portfolio (the first two columns of the table from the appendix) by solving the problem~\eqref{obj1}--\eqref{sdc1}.
For this we first fix some initial reference portfolio, $x_\mathit{ref}'=(0.7, 0.3)$ with average return $\mu_\mathit{ref}=0.0629$ and corresponding CDF $\mathcal F_\mathit{ref}(t)$.
Then we relax the constraint~\eqref{sdc1} by replacing $\mathcal F_\mathit{ref}(t)$ with shifted function $\mathcal F_\mathit{ref}(t-\delta)$, $\delta=0.05$.
Note, that we can take $\delta$ dependent on $t$.
With this new reference function we solve the problem~\eqref{funcF} with different values of the parameter $c\in \{0.0659,1.0659\}$ by the stochastic smoothing method from Subsection~\ref{sssm}. 

Figure~\ref{fig1} presents the results.
The pictures illustrate how the method climbs up to the global maximum of the discontinuous objective function.
In the two presented examples the method finds better portfolios with average return~$\mu\approx0.0640$.
The right picture also highlights the set of feasible portfolios because by setting a larger~$c$, we let down the values of the penalized function at the infeasible points.
It can be seen that the proposed version of the smoothing method is not very sensible to
discontinuities of the minimized function.

One more example is presented on Figure~\ref{fig2}.
The reference 2-security portfolio is $x_\mathit{ref}'=(x_4=0.3, x_9=0.7)$, $\delta=0.05$.
In this example, the feasible set is not connected.
The optimal portfolio consists of the two assets $x_{4,9}^*=(x_4=0.8779, x_9=0.1219)'$ with the expected return $\mu(x_{4,9}^*)=0.1664$.
If we extend the portfolio to nine assets, we can obtain by the proposed method the better return $\mu_{1:9}^*=0.1724$ with the optimal portfolio 
\[	x^*=( 0.0117, 0.0131, 0.0730, 0.2936, 0.4619, 0.0123, 0.0080, 0.0324, 0.0880)',\]
$\sum_i x^*_i= 0.9941$, within the same bounds on risk, $G(x^*)=0$.

\subsection{Lower bounding the risk-return profile and maximizing the tail return}
In this subsection we present results of optimization of 3- and 10-component portfolios
under 1\textsuperscript{st} order stochastic dominance constraints. 
The constraint is given by its (reference) CDF, and as objective functions we use the average value (AV) of the portfolio return, the Value-at-Risk (quantile, $\VaR_\alpha$) and Average Value-at-Risk indicators ($\AVaR_\alpha$) with levels $\alpha= 40\,\%$ and $\alpha= 70\,\%$.
The results are given in table and in graphical forms, each set of experiments is specified by the number of portfolio components (3 or 10), value of parameter $\alpha$ ($40\,\%$ or $70\,\%$) and the exact penalty method applied (discontinuous or projective from Subsections~\ref{edpf} and~\ref{eppf}).
So each table contains three numerical rows corresponding to three kinds of the objective functions: (1) the mean, (2) the $\VaR$, and (3) the $\AVaR$.
The columns ‘Mean’, ‘$\VaR$’ and ‘$\AVaR$’ show the objective values of the corresponding indicators for the three optimal portfolios.
The other columns show the structure of the obtained portfolios.
Each table is supplemented by a reference figure containing three graphs displaying the optimal risk profile.
The blue broken line in a graph depicts the reference CDF, a (left) bound on the portfolio return, CDF([0.05; 0.05; 0.1; 0.11; 0.125])=[0; 0.2; 0.4; 0.6; 1]. 
The red (right) broken line shows the CDF of the actual optimal portfolio return.
So the lines display the reference and the actual risk profiles of the optimal portfolios. 

The Tables~\ref{table_2022_12_03_ind_3comp_disc} and~\ref{table_2022_12_03_ind_3comp_anal_lev0.4}
(and the corresponding Figures~\ref{fig.2022.12.03_3comp_disc} and~\ref{fig.2022.12.03_3comp_anal_lev0.4}) compare results of solving portfolio optimization problems by means of discontinuous and projective penalty methods, respectively, $\alpha= 40\,\%$.
As can be seen, both figures, Figure~\ref{fig.2022.12.03_3comp_disc} and~\ref{fig.2022.12.03_3comp_anal_lev0.4}, are very similar, that can be a proof that both penalty methods are applicable and give close results.

\begin{table*}[ht]
	\caption{Optimal 3 component portfolios, discontinuous penalties: 1)~Max mean. 2)~Max $\VaR$. 3)~Max $\AVaR$. See Fig.~\ref{fig.2022.12.03_3comp_disc}.}
\label{table_2022_12_03_ind_3comp_disc}
\begin{tabular}{rrrrrrr}
\hline
Portf. No	&Mean	&$\VaR_{0.4}$	&$\AVaR_{0.4}$	&\;$x_4$	&$x_9$	&$x_{10}$	\\
1	&0.1351	&0.1320 &0.1552 &0.1229	&0.0085	&0.8675	\\
3	&0.1355 &0.1344	&0.1560	&0.1313	&0.0001	&0.8670 \\
2	&0.1351	&0.1323	&0.1554	&0.1219	&0.0105	&0.8675 \\
\hline
\end{tabular}
\end{table*}

\begin{figure*}[ht]
  \includegraphics[width=0.325\textwidth]{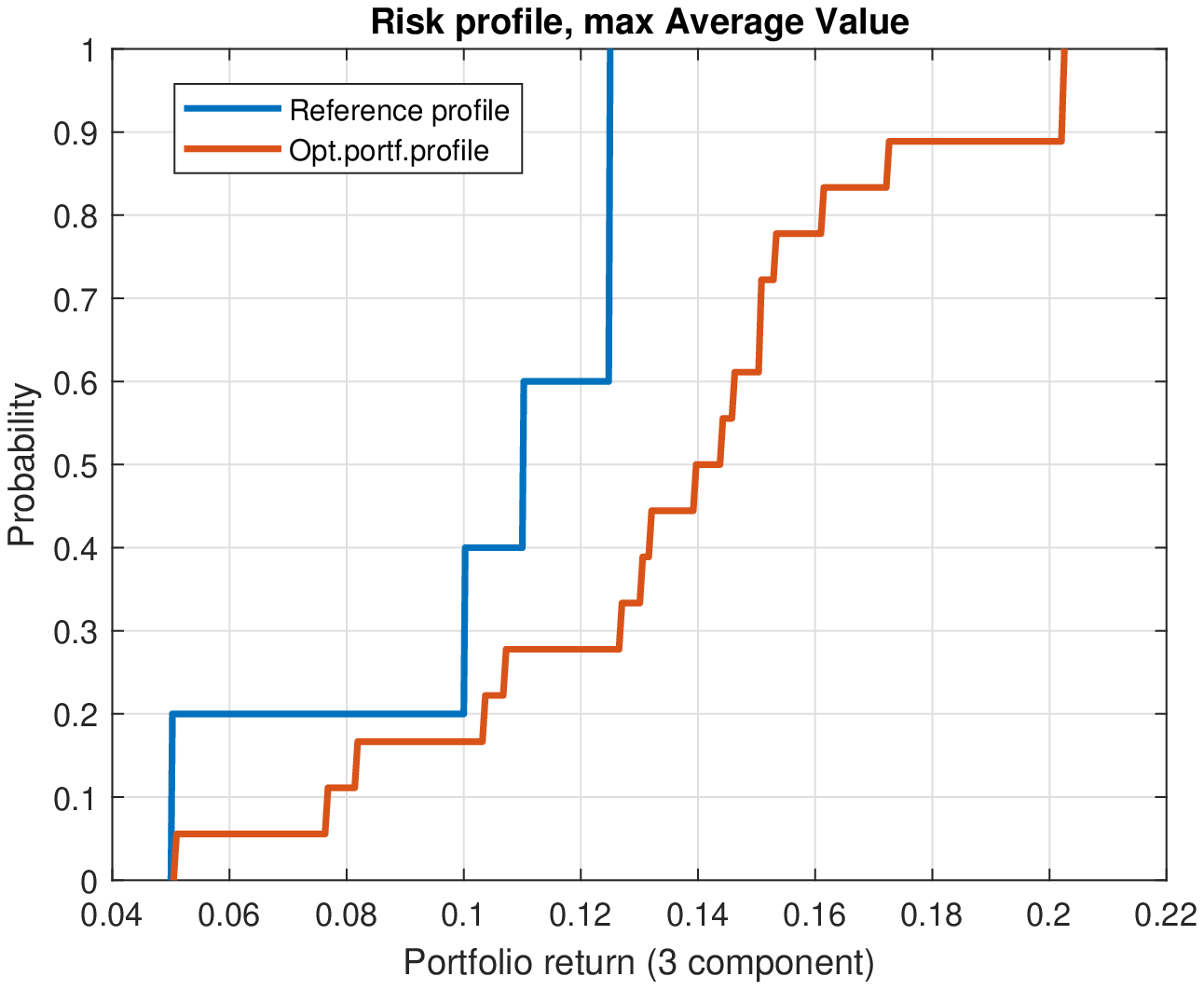}
	\includegraphics[width=0.335\textwidth]{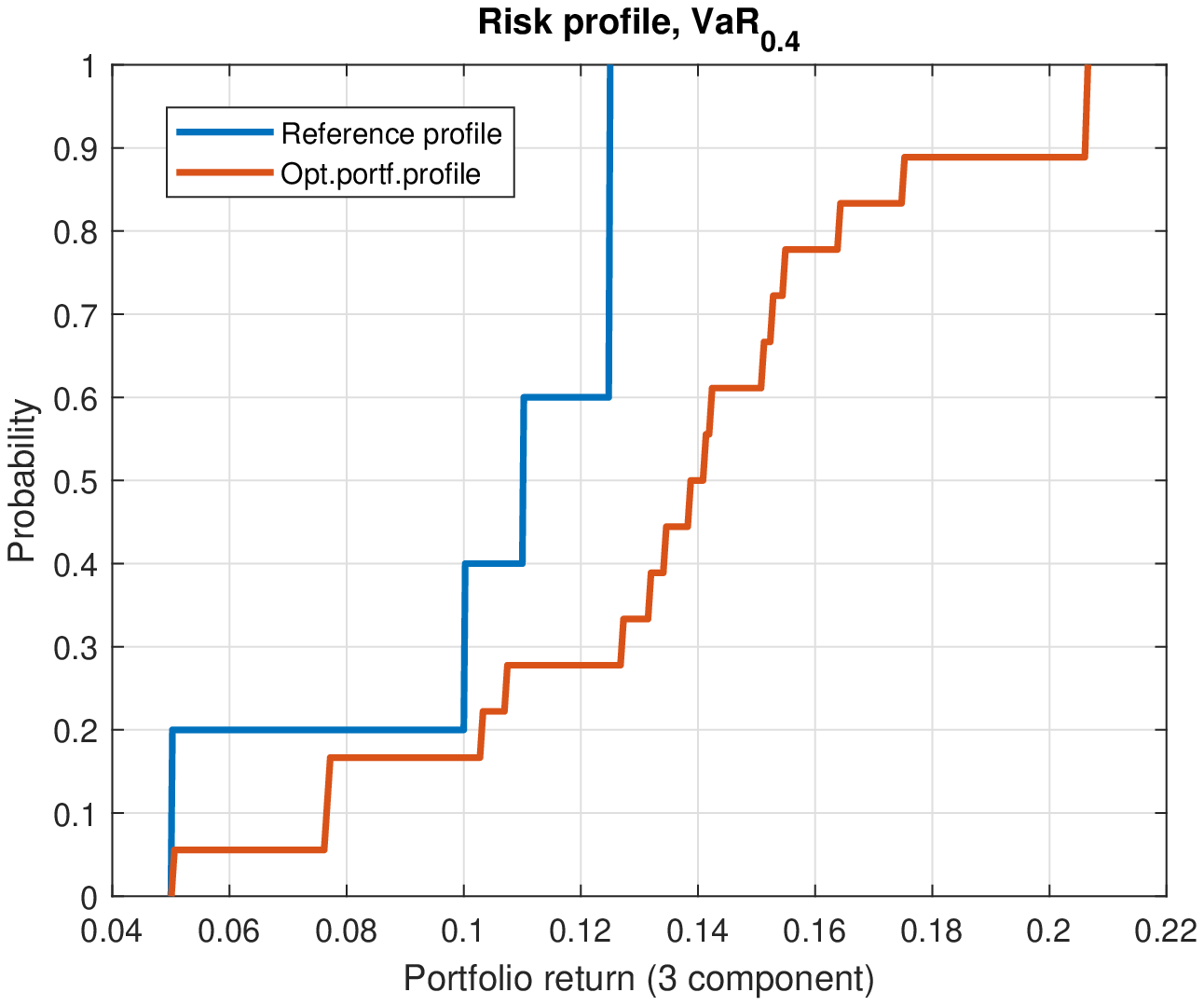}
	\includegraphics[width=0.325\textwidth]{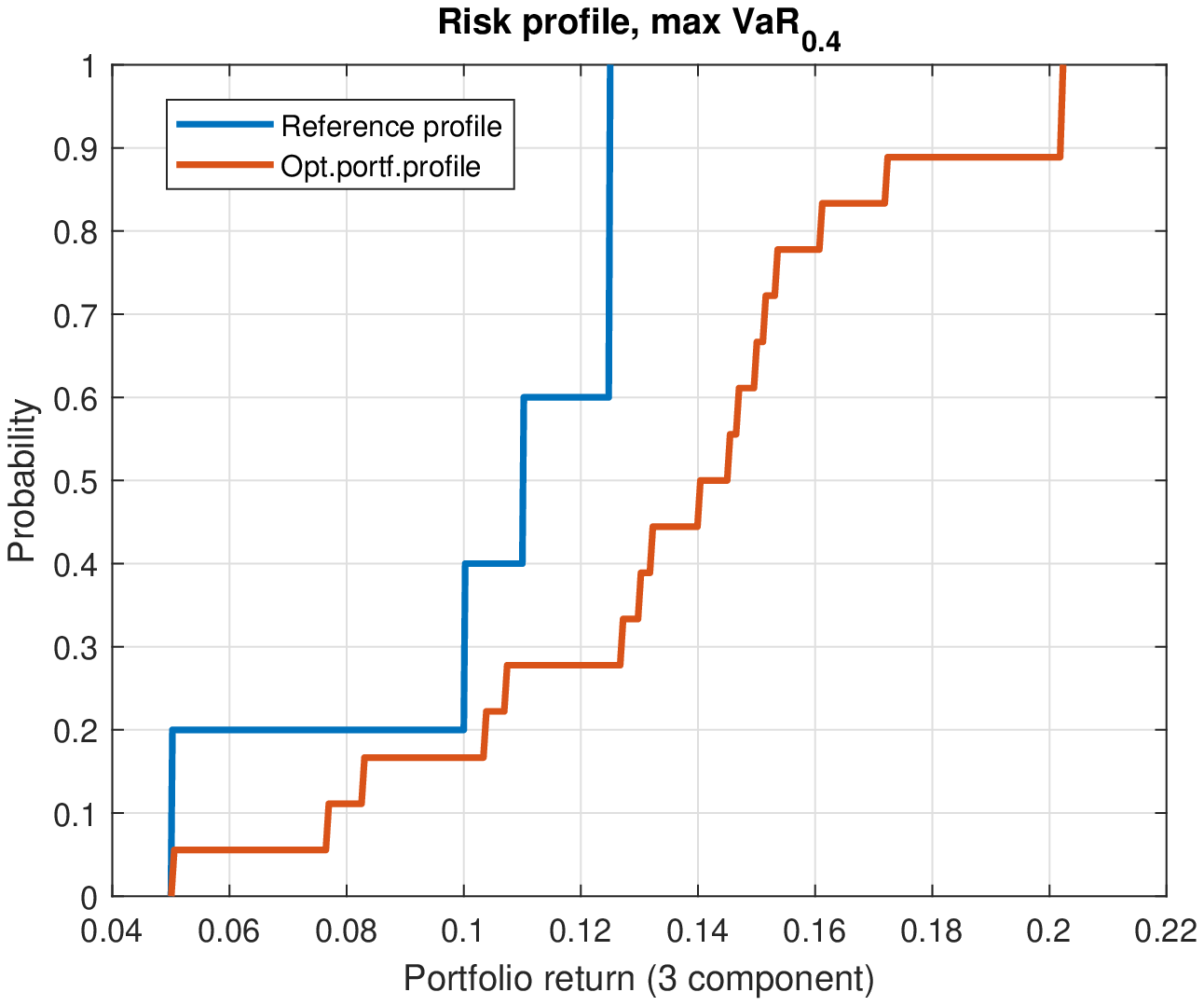}
	
\caption{Profiles of optimal 3 component portfolios: maximizing the 
tail returns under the risk-return lower bound. Discontinuous penalties.
1) Optimal average return. 
2)  Optimal $\VaR_{40\%}$.  
3)  Optimal $\AVaR_{40\%}$; cf.\ Table~\ref{table_2022_12_03_ind_3comp_disc}.
}
\label{fig.2022.12.03_3comp_disc}       
\end{figure*}

\begin{table*}[ht]
\caption{Optimal 3 component portfolios, analytical projection: 
1)~Max mean. 2)~Max $\VaR_{40\%}$. 3)~Max $\AVaR_{40\%}$. See Fig.~\ref{fig.2022.12.03_3comp_anal_lev0.4}.}
\label{table_2022_12_03_ind_3comp_anal_lev0.4}
\begin{tabular}{rrrrrrr}
\hline
Portf. No	&Mean	&$\VaR_{0.4}$	&$\AVaR_{0.4}$	&\;$x_4$	&$x_9$	&$x_{10}$	\\
1	&0.1357	&0.1531 &0.1707 &0.1308	&0.0009	&0.8683	\\
2	&0.1356	&0.1527	&0.1704	&0.1298	&0.0000	&0.8702 \\
3	&0.1357 &0.1531	&0.1709	&0.1313	&0.0004	&0.8684 \\
\hline
\end{tabular}
\end{table*}

\begin{figure*}[ht]
  \includegraphics[width=0.325\textwidth]{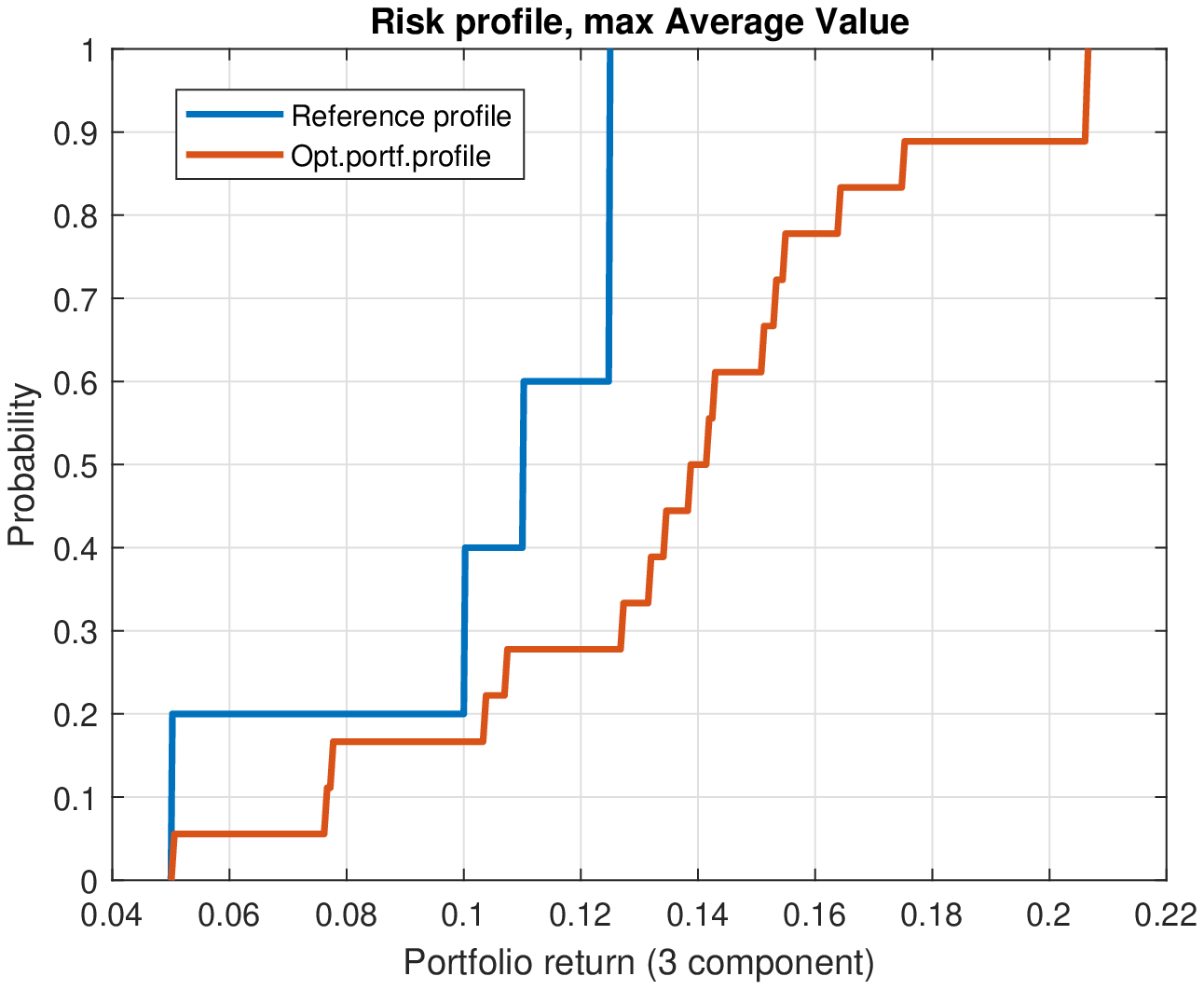}
	\includegraphics[width=0.325\textwidth]{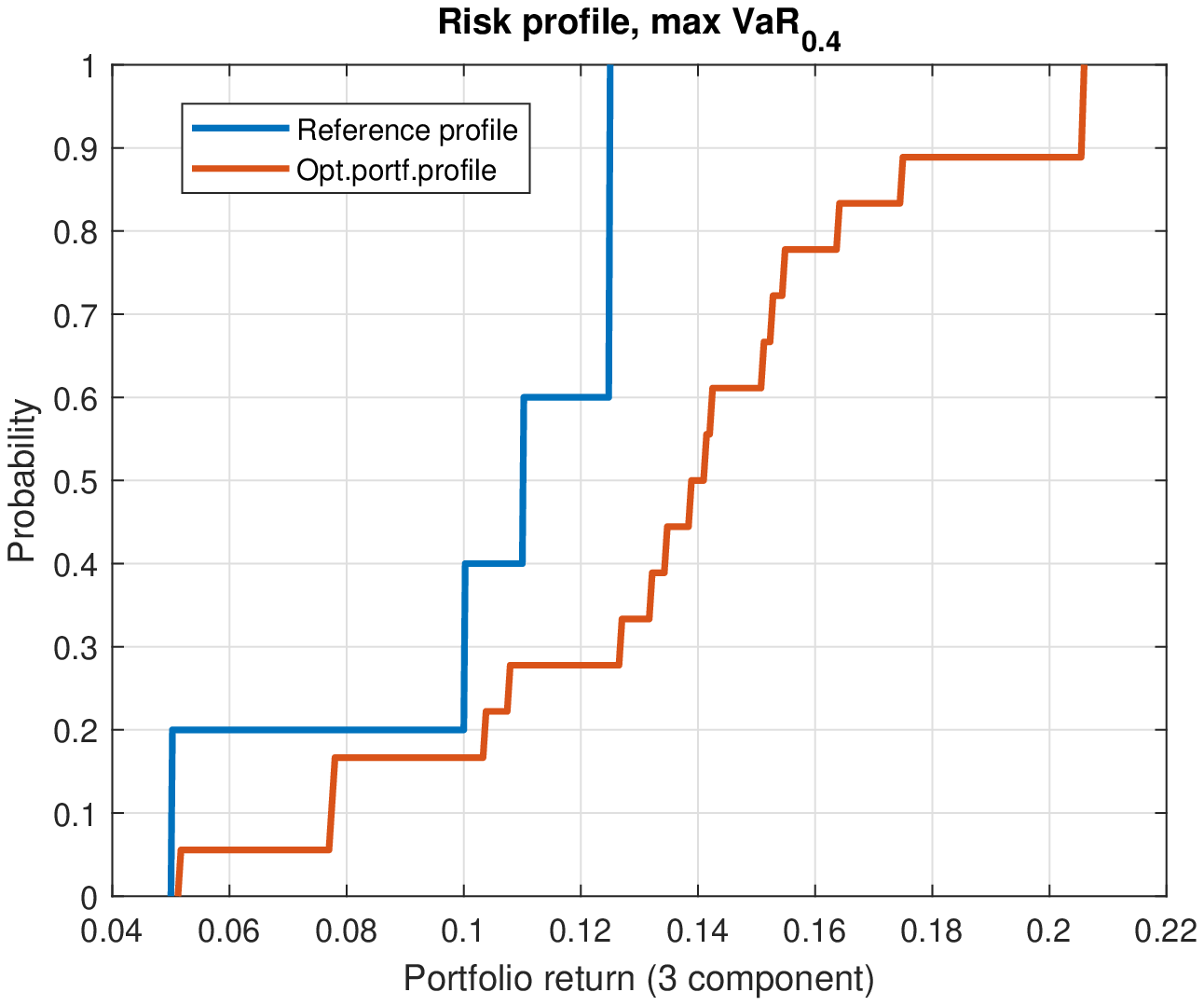}
	\includegraphics[width=0.335\textwidth]{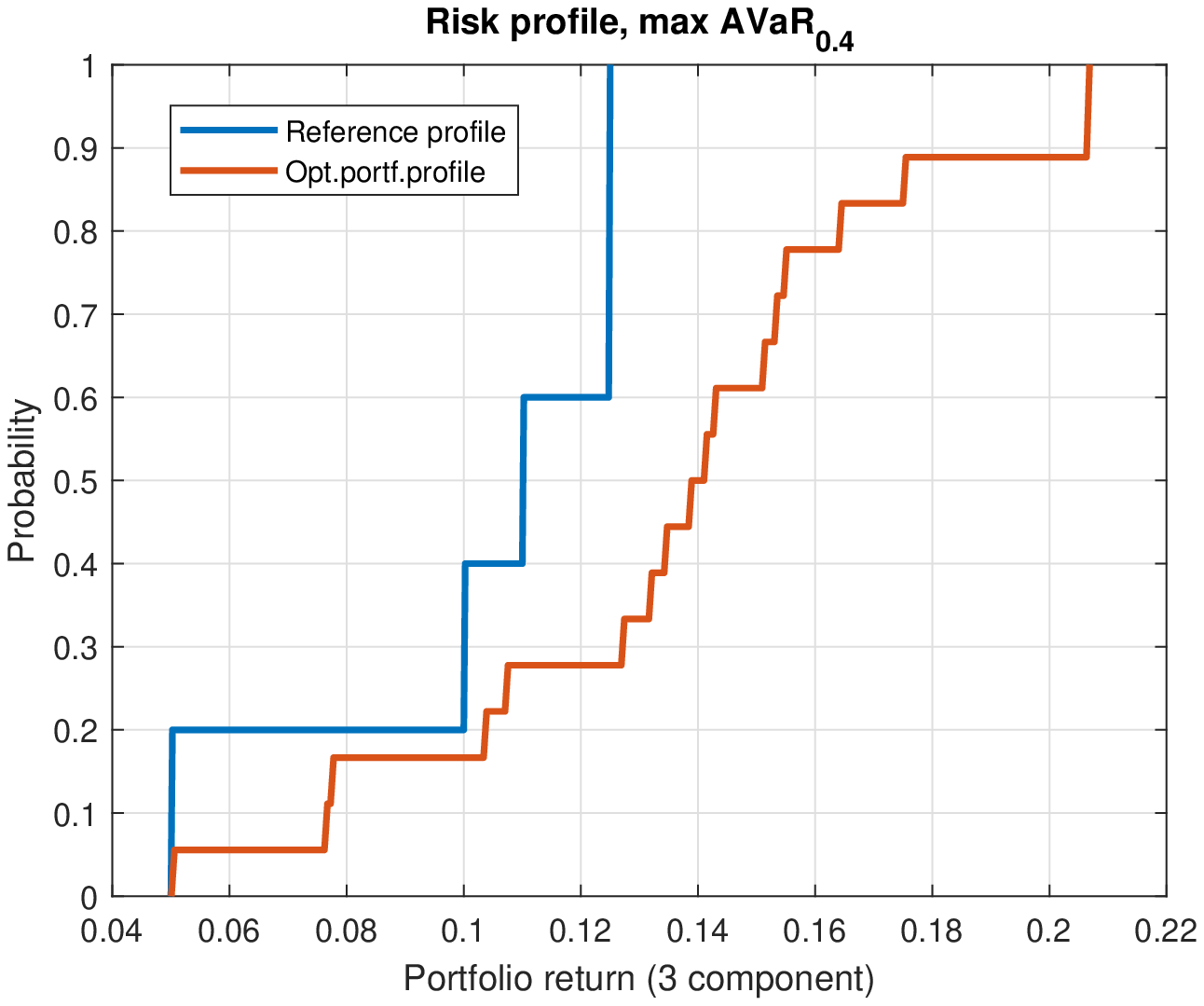}
\caption{Profiles of optimal 3 component portfolios: maximizing the tail returns under the risk-return lower bound. Analytical projective penalties.
	1) Optimal average return. 
	2) Optimal $\VaR_{0.4}$.  
	3) Optimal $\AVaR_{0.4}$. See Table~\ref{table_2022_12_03_ind_3comp_anal_lev0.4}.
}
\label{fig.2022.12.03_3comp_anal_lev0.4}       
\end{figure*}

The next two Tables~\ref{table_2022_12_03_ind_3comp} and~\ref{table_2022_12_03_ind}
(and the corresponding Figures~\ref{fig.2022.12.03_3comp} and~\ref{fig.2022.12.03})
show the effect of extension of a portfolio for account of new securities, from 3 to 10, for $\alpha= 70\,\%$.
The objective functions values in Table~\ref{table_2022_12_03_ind} are greater than the corresponding values in Table~\ref{table_2022_12_03_ind_3comp}. 
The corresponding Figures~\ref{fig.2022.12.03_3comp} and~\ref{fig.2022.12.03} indicate changes in the risk profiles of optimal portfolios due to this enlargement.
The increase of the objective functions happens also for account of huddling the risk profiles to the reference ones.
The pictures also show the influence of the different objective functions on the risk profiles of the optimal portfolios.
Finally, Table~\ref{table_2022_12_03_port} shows the structures of the optimal 10-component portfolios.

\begin{table*}[ht] \label{table_2022_12_03_ind_3comp}
	\begin{center}
	\caption{Optimal 3 component portfolios, analytical projection: 1)~Max mean. 2)~Max $\VaR$. 3)~Max $\AVaR$. See Figure~\ref{fig.2022.12.03_3comp}.}
	\begin{tabular}{rrrrrrr}
	\hline
	Portfolio No	&Mean	&$\VaR_{70\%}$	&$\AVaR_{70\%}$	&\;$x_4$	&$x_9$	&$x_{10}$	\\
	1	&0.1357	&0.1531 &0.1707 &0.1308	&0.0009	&0.8683	\\
	2	&0.1326	&0.1549	&0.1639	&0.0797	&0.0557	&0.8643 \\
	3	&0.1357 &0.1530	&0.1709	&0.1315	&0.0001	&0.8684 \\
	\hline
	\end{tabular}
	\end{center}
\end{table*}

\begin{figure*}[ht]
  \includegraphics[width=0.325\textwidth]{Fig_2022_12_03_3comp_AV_anal_proj.eps}
	\includegraphics[width=0.325\textwidth]{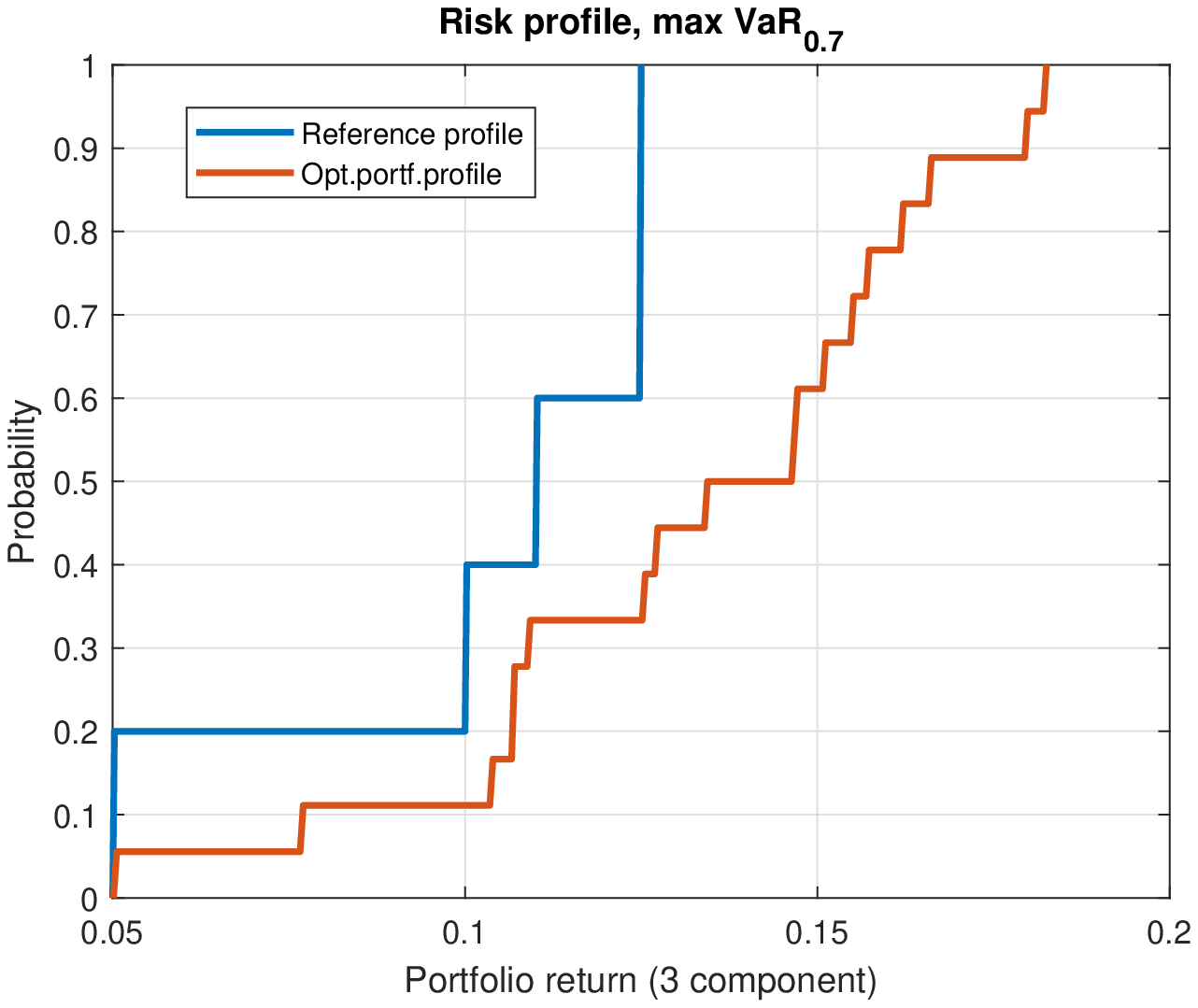}
	\includegraphics[width=0.335\textwidth]{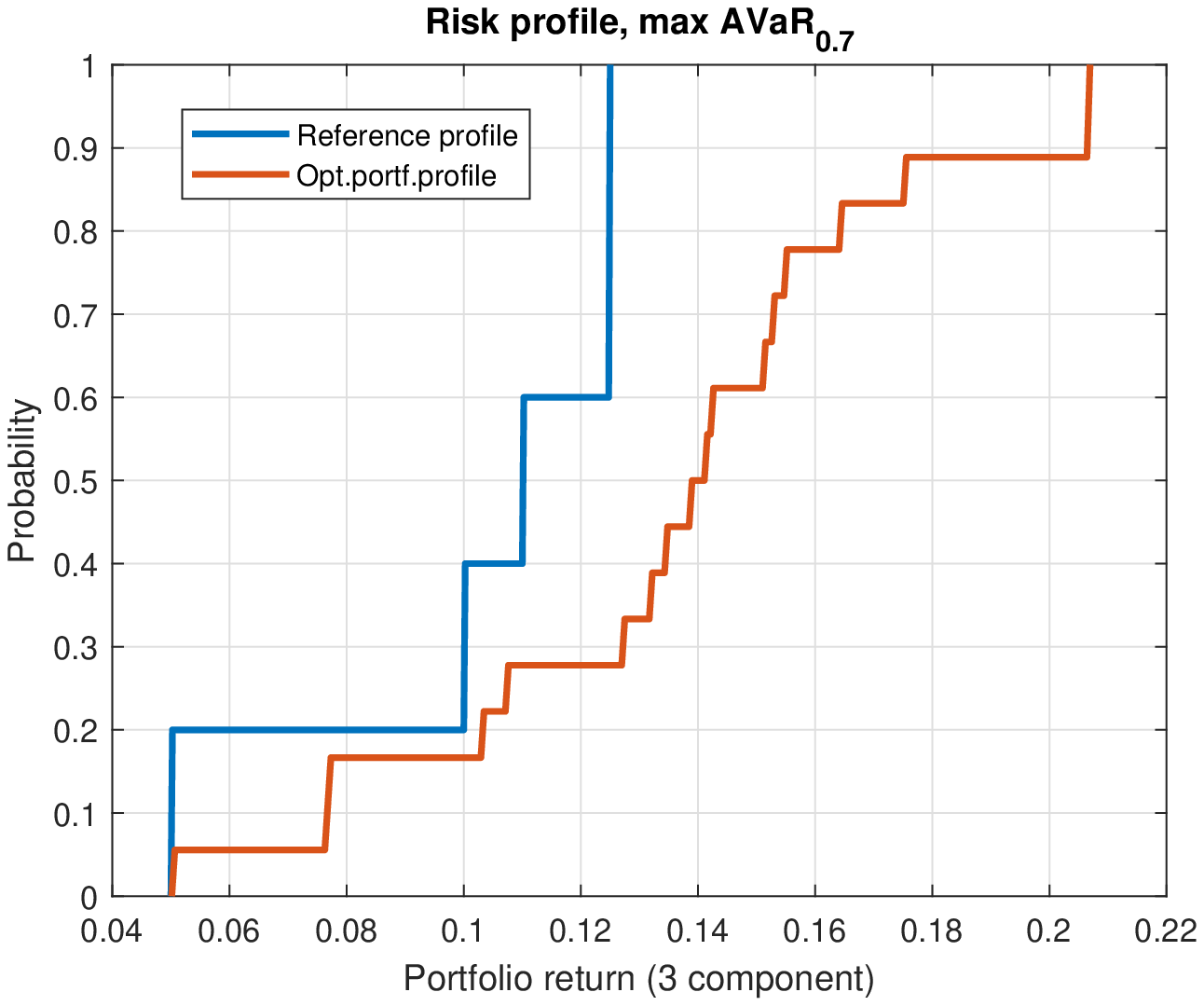}
\caption{Profiles of optimal 3 component portfolios: maximizing the 
tail returns under the risk-return lower bound.
Analytical projective penalties.
1) Optimal average return. 
2)  Optimal $\VaR_{70\%}$.  
3)  Optimal $\AVaR_{70\%}$. See Table~\ref{table_2022_12_03_ind_3comp}.
}
\label{fig.2022.12.03_3comp}       
\end{figure*}

\begin{table*}[ht]
\caption{Optimal 10 component portfolios, analytical projection: 1)~Max mean. 2)~Max $\VaR$. 3)~Max $\AVaR$. See Fig.~\ref{fig.2022.12.03}.}
\label{table_2022_12_03_ind}
\begin{tabular}{rrrrrrr}
\hline
Portf. No	&Mean	&$\VaR_{70\%}$	&$\AVaR_{70\%}$	&\;$x_4$	&$x_9$	&$x_{10}$	\\
1	&0.1380	&0.1643 &0.1781 &0.0126	&0.0022	&0.8615	\\
2	&0.1322	&0.1728	&0.1739	&0.0006	&0.0425	&0.8418 \\
3	&0.1367 &0.1629	&0.1901	&0.0126	&0.0002	&0.8490 \\
\hline
\end{tabular}
\end{table*}

\begin{figure*}[ht]
  \includegraphics[width=0.325\textwidth]{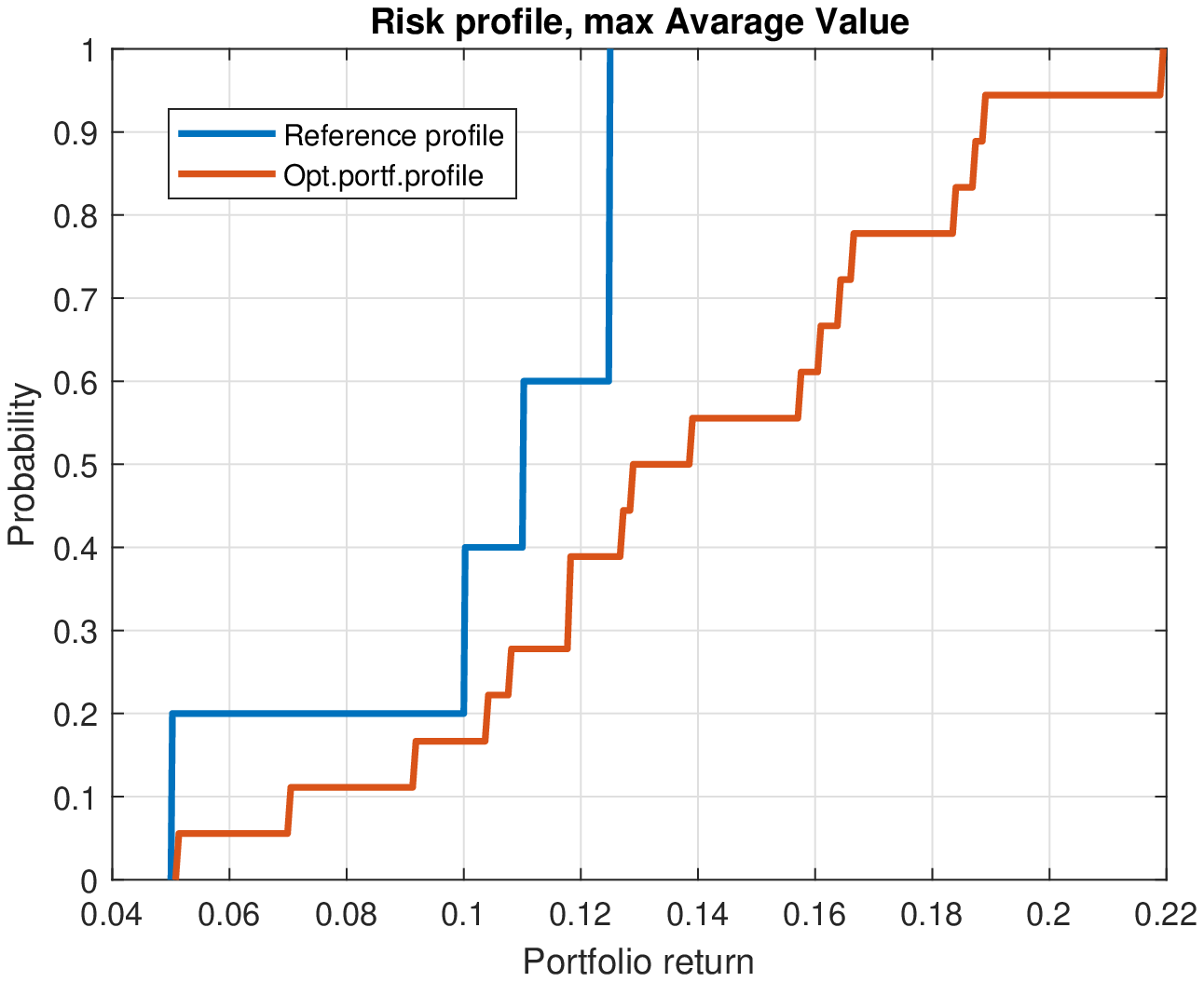}
	\includegraphics[width=0.325\textwidth]{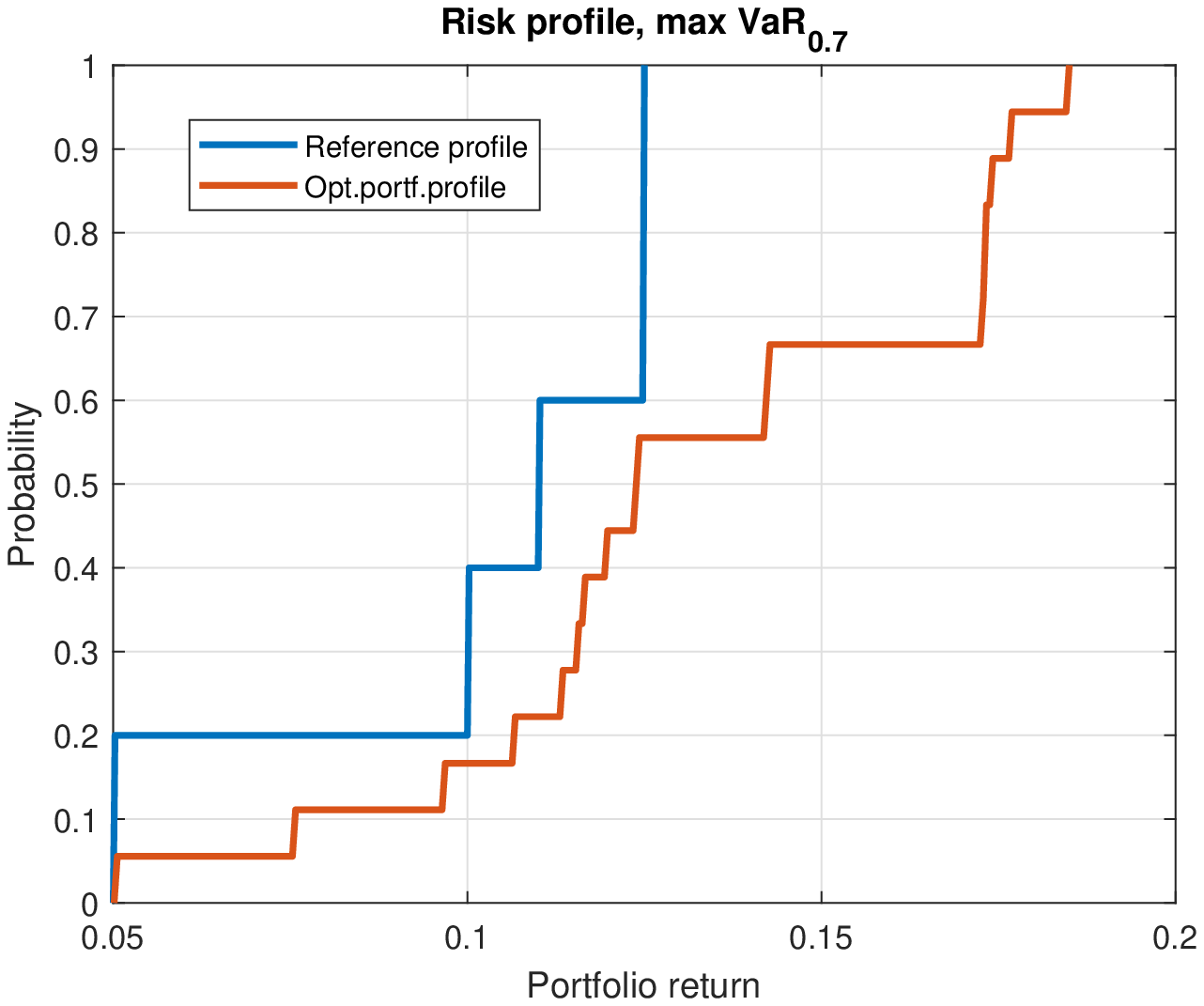}
	\includegraphics[width=0.335\textwidth]{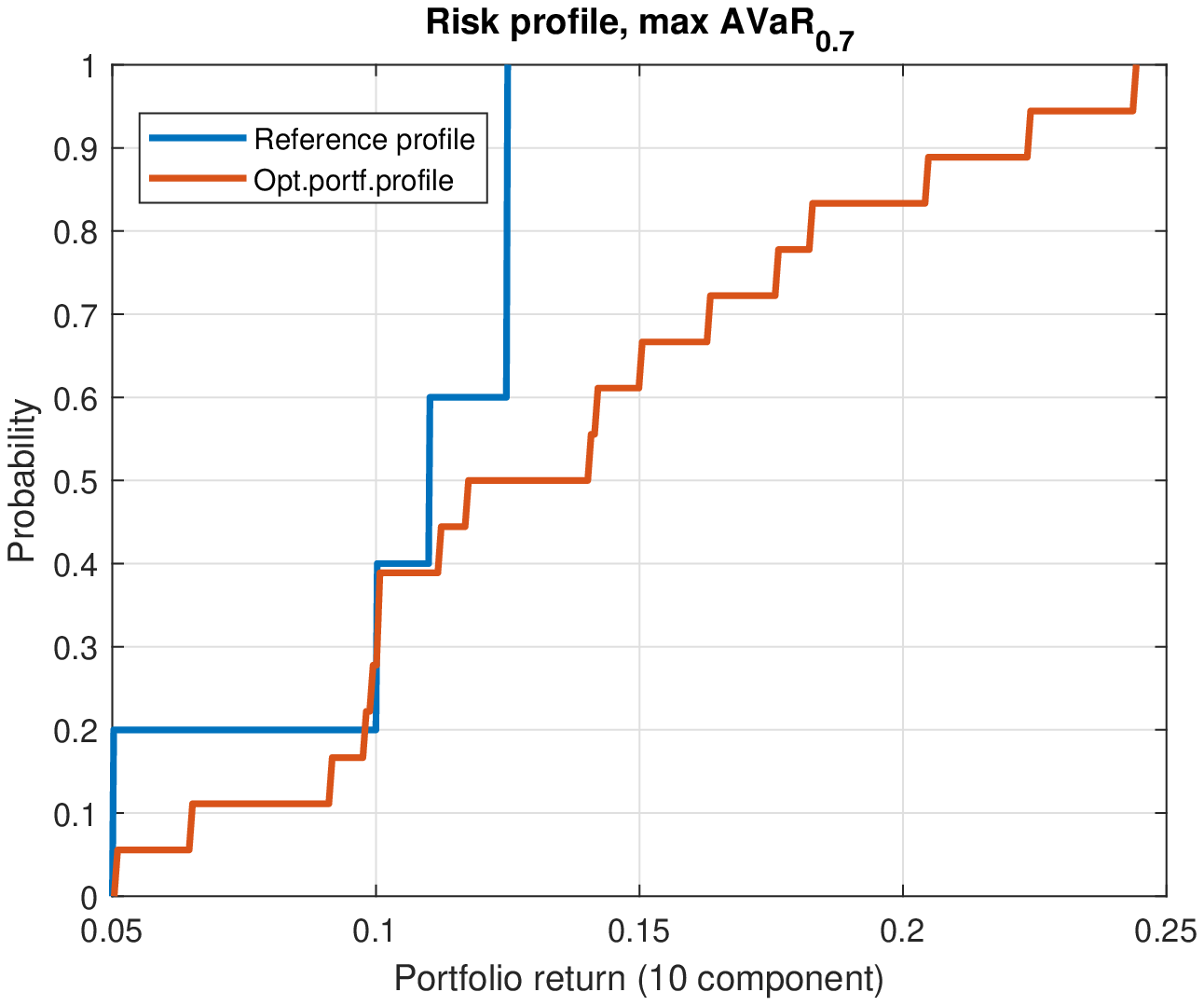}
\caption{Profiles of optimal 10 component portfolios: maximizing the 
tail returns under the risk-return lower bound. Analytical projective penalties.
1) Optimal average return. 
2) Optimal $\VaR_{70\%}$.  
3) Optimal $\AVaR_{70\%}$. See Tables~\ref{table_2022_12_03_ind} and~\ref{table_2022_12_03_port}.
}
\label{fig.2022.12.03}       
\end{figure*}
\clearpage

\begin{table*}[h]
\caption{The optimal 10 component portfolio}
\label{table_2022_12_03_port}
\begin{tabular}{rrrrrrrrrrr}
\hline
Portf.	&$x_1$	&$x_2$	&$x_3$	&$x_4$ &$x_5$	&$x_6$	&$x_{7}$ &$x_8$	&$x_9$ &$x_{10}$\\
Mean	      &.0013	&.0001  &.0059  &.0126 &.1011	&.0008	&.0031 &.0117 &.0022 &.8615\\
$\VaR_{0.7}$	&.0029	&.0103	&.0137	&.0006 &.0512	&.0128	&.0038 &.0202 &.0425 &.8418 \\
$\AVaR_{0.7}$&.0001	&.0001	&.0201	&.0126 &.0083	&.0005	&.0001 &.1090 &.0002 &.8490 \\
\hline
\end{tabular}
\end{table*}

\section{Conclusions}
The paper considers a specific method for optimization, which transforms a constraint optimization problem to an unconstrained, global optimization problem.
The paper illustrates the procedure for an optimization problem with uncountable many constraints.
More specifically, the paper considers financial portfolio optimization under 1-st order stochastic dominance constraints.
In the literature, similar portfolio optimization problems are mostly considered under 2\textsuperscript{nd} order stochastic dominance constraints, which constitutes a convex problem.
Few exceptions are \cite{Dentcheva_Ruszczynski_2004, Noyan_Rudolf_Ruszczynski_2006, Noyan_Ruszczynski_2008, Dentcheva_Ruszczynski_2013}.
The 1\textsuperscript{st} order constraints put lower bounds on the risk profile (CDF) of the optimized portfolio.
As objective functions, different aggregated indicators can serve, e.g., the expected value, the Value-at-Risk, or the average Value-at-Risk, etc.
In this setting, we put lower bounds on low returns and try to maximize higher returns.

Such constraints make the problem non-convex and hard for numerical treatment.
We propose the new exact penalty functions to handle the constraints and a new stochastic optimization (smoothing) techniques for solving penalty problems.
The approach is numerically and graphically illustrated on small test examples.
The advantage of the proposed approach to financial portfolio optimization consists in an additional visual control of the risk profile of the optimal portfolio.

\section{Acknowledgment}

    We would like to thank the editor of the journal and the referees for their commitment to assess and improve the paper.
    The authors gratefully acknowledge support by Volkswagenstiftung and DFG (Project-ID 4162\-28727 – SFB 1410).

\section{Data availability statement and conflict of interest}
	This study builds upon publicly available data collected in Table~\ref{table2}.
	The authors have no conflicts of interest to disclose. 

\bibliographystyle{abbrvnat}
\bibliography{LiteraturVladimir,LiteraturAlois}

\appendix

\begin{table} \label{table2}
\caption{Return data set from \cite[Table 1, page 13]{Markowitz_1959}, with artificial bond column}
\begin{tabular}{rrrrrrrrrrr}
\hline
Year	&Am.T.	&A.T.\&T.	&U.S.S.	&G.M.	&A.T.\&Sfe	&C.C.	&Bdn.	&Frstn.	&S.S. &Bond\\
1937	&-0.305	&-0.173	&-0.318	&-0.477	&-0.457	&-0.065	&-0.319	&-0.400	&-0.435 &0.125\\
1938	&0.513	&0.098	&0.285	&0.714	&0.107	&0.238	&0.076	&0.336	&0.238  &0.125\\
1939	&0.055	&0.200	&-0.047	&0.165	&-0.424	&-0.078	&0.381	&-0.093	&-0.295 &0.125\\
1940	&-0.126	&0.030	&0.104	&-0.043	&-0.189	&-0.077	&-0.051	&-0.090	&-0.036 &0.125\\
1941	&-0.280	&-0.183	&-0.171	&-0.277	&0.637	&-0.187	&0.087	&-0.400	&-0.240 &0.125\\
1942	&-0.003	&0.067	&-0.039	&0.476	&0.865	&0.156	&0.262	&1.113	&0.126  &0.125\\
1943	&0.428	&0.300	&0.149	&0.225	&0.313	&0.351	&0.341	&0.580	&0.639  &0.125\\
1944	&0.192	&0.103	&0.260	&0.290	&0.637	&0.233	&0.227	&0.473	&0.282  &0.125\\
1945	&0.446	&0.216	&0.419	&0.216	&0.373	&0.349	&0.352	&0.229	&0.578  &0.125\\
1946	&-0.088	&-0.046	&-0.078	&-0.272	&-0.037	&-0.209	&0.153	&-0.126	&0.289  &0.125\\
1947	&-0.127	&-0.071	&0.169	&0.144	&0.026	&0.355	&-0.099	&0.009	&0.184  &0.125\\
1948	&-0.015	&0.056	&-0.035	&0.107	&0.153	&-0.231	&0.038	&0.000	&0.114  &0.125\\
1949	&0.305	&0.038	&0.133	&0.321	&0.067	&0.246	&0.273	&0.223	&-0.222 &0.125\\
1950	&-0.096	&0.089	&0.732	&0.305	&0.579	&-0.248	&0.091	&0.650	&0.327  &0.125\\
1951	&0.016	&0.090	&0.021	&0.195	&0.040	&-0.064	&0.054	&-0.131	&0.333  &0.125\\
1952	&0.128	&0.083	&0.131	&0.390	&0.434	&0.079	&0.109	&0.175	&0.062  &0.125\\
1953	&-0.010	&0.035	&0.006	&-0.072	&-0.027	&0.067	&0.210	&-0.084	&-0.048 &0.125\\
1954	&0.154	&0.176	&0.908	&0.715	&0.469	&0.077	&0.112	&0.756	&0.185  &0.125\\
\hline
\end{tabular}
\end{table}

\end{document}